





\documentclass[sn-mathphys]{sn-jnl}


%
\usepackage{graphicx}
\usepackage{hyperref} 
\usepackage{amsfonts, amsthm}
\usepackage{amsmath}
\usepackage{xcolor}
\usepackage{hyperref}
\usepackage{epsfig}
\usepackage{epstopdf}
\usepackage{comment}
\usepackage{graphicx}
\usepackage{color}
\usepackage{url}
\usepackage{subfig}
\usepackage{lineno}
\numberwithin{equation}{section}



\jyear{2021}%

\theoremstyle{thmstyleone}%
\newtheorem{theorem}{Theorem}
\newtheorem{proposition}{Proposition}

\raggedbottom

\begin{document}


\title[Optimal anti-hormonal treatment for breast cancer]{Modeling of mouse experiments suggests that optimal anti-hormonal treatment for breast cancer is diet-dependent}


\author*[1,2]{\fnm{Tu\u{g}ba} \sur{Akman}} \email{takman@thk.edu.tr} 

\author[3]{\fnm{Lisa M.} \sur{Arendt}} \email{lmarendt@wisc.edu}

\author[4,5]{\fnm{J\"{u}rgen Geisler}} \email{jurgen.geisler@medisin.uio.no}

\author[6]{\fnm{Vessela N.} \sur{Kristensen}} \email{v.n.kristensen@medisin.uio.no}

\author[1,7]{\fnm{Arnoldo} \sur{Frigessi}} \email{arnoldo.frigessi@medisin.uio.no}

\author*[1,7]{\fnm{Alvaro} \sur{K\"ohn-Luque }} \email{alvaro.kohn-luque@medisin.uio.no} 

\affil[1]{\orgdiv{Oslo Centre for Biostatistics and Epidemiology}, \orgname{Faculty of Medicine, University of Oslo}, \city{Oslo}, \postcode{0317}, \country{Norway}}

\affil[2]{\orgdiv{Department of Computer Engineering}, \orgname{University of Turkish Aeronautical Association}, \orgaddress{\street{Etimesgut}, \city{Ankara}, \postcode{06790}, \country{Turkey}}}

\affil[3]{\orgdiv{Department of Comparative Biosciences}, \orgname{University of Wisconsin-Madison}, \country{U.S.A}}

\affil[4]{\orgdiv{Department of Oncology}, \orgname{Akershus University Hospital}, \city{Lørenskog}, \country{Norway}}

\affil[5]{\orgdiv{Institute of Clinical Medicine}, \orgname{Faculty of Medicine, University of Oslo}, \city{Campus AHUS}, \country{Norway}}

\affil[6]{\orgdiv{Department of Medical Genetics, Institute of Clinical Medicine}, \orgname{Oslo University Hospital and University of Oslo}, \orgaddress{\city{Oslo}, \country{Norway}}}

\affil[7]{\orgdiv{Oslo Centre for Biostatistics and Epidemiology}, \orgname{Oslo University Hospital}, \city{Oslo}, \country{Norway}}





\abstract{
Estrogen receptor positive breast cancer is frequently treated with anti-hormonal treatment such as aromatase inhibitors (AI). Interestingly, a high body mass index has been shown to have a negative impact on AI efficacy, most likely due to disturbances in steroid metabolism and adipokine production. Here, we propose a mathematical model based on a system of ordinary differential equations to investigate the effect of high-fat diet on tumor growth. We inform the model with data from mouse experiments, where the animals are fed with high-fat or control (normal) diet. By incorporating AI treatment with drug resistance into the model and by solving optimal control problems we found differential responses for control and high-fat diet. To the best of our knowledge, this is the first attempt to model optimal anti-hormonal treatment for breast cancer in the presence of drug resistance. Our results underline the importance of considering high-fat diet and obesity as factors influencing clinical outcomes during anti-hormonal therapies in breast cancer patients.}

\keywords{optimal control, differential equations, estrogen receptor positive breast cancer, aromatase inhibitors, drug resistance, high-fat diet.}



\maketitle

\section{Introduction}\label{intro}

Lifestyle factors such as age at menarche and menopause, body mass index, child birth and breast feeding, as well as genetic disposition, among others, are well-established breast cancer risk factors \cite{wu2016substantial, neuhouser2015overweight}.  However, much less is known about the role lifestyle factors play on breast cancer treatment response. Anti-hormonal treatment for estrogen receptor (ER) positive breast cancer constitutes a puzzling case in obese patients that requires more quantitative investigation. Approximately 75\% of all breast tumors express ER, and most women with these tumors will receive anti-hormonal therapy \cite{clark1984correlations}. ER in breast cancer cells is activated by estrogen and it promotes cell proliferation and tumor growth \cite{johnston2003aromatase}. Anti-hormonal treatment with Aromatase Inhibitors (AI) decreases estrogen levels while anti-estrogen's block directly the action of steroids at the estrogen receptor \cite{pearson1982antiestrogen}. Interestingly, high Body Mass Index (BMI) and adiposity have a negative impact on AI efficacy \cite{folkerd2012suppression, ioannides2014effect, jiralerspong2016obesity, bahrami2021lack, wang2015aromatase, gelsomino2020leptin, goodwin2010obesity, lonning2014relationship, sendur2012efficacy}. While the puzzle of the optimal anti-hormonal therapy in postmenopausal obese women is still unfinished, good monitoring of the suppression of estrogen levels with valid methods may guide treatment decisions during treatment with aromatase inhibitors \cite{bordeleau2010multicenter, ligibel2012risk}.

An additional layer of complexity arises from the fact that ER-positive breast cancer cells may be resistant to anti-hormonal treatments. Resistance can arise due to multiple mechanisms that are not completely understood \cite{daldorff2017cotargeting, ma2015mechanisms}. Tumor cells can adapt to AI therapy after exposure for certain time (adaptive resistance), for instance due to the upregulation of ER expression or activation of alternative pathways conferring the cells survival and proliferative capacity. Instead, de novo or pre-existing resistance refers to the presence of estrogen independent cells before therapy. For instance, cells carrying specific mutations of the ER that confer constitutive ligand-idependent activity \cite{jeselsohn2015esr1}, which might lead to clonal selection under anti-hormonal treatment. The current paradigm consist of administering high AI doses to both obese and non obese patients, but this may not be the best strategy to avoid or delay drug resistance.

The aforementioned issues are difficult to quantify in preclinical and clinical settings and can benefit from more formal approaches. Here, we propose a new mathematical model, based on a system of ordinary differential equations (ODEs), to model the concentration of estrogen in the cancer tissue, which takes into account the local interplay between the tumor and fat tissues. We inform the model with data from mouse experiments that investigate the effect of obesity in breast cancer using two groups of mice, fed with control diet (CD) or with high-fat diet (HFD). Then, we incorporate AI therapy into the calibrated models, including de novo and adaptive resistance. To determine optimal therapeutic interventions in the CD and HFD cases, we formulate an optimal control problem (OCP) with the goal of minimizing the total tumor volume and the total amount of treatment that is used. We also compare the obtained optimal schedules with constant and alternating treatments.

Mathematical modeling of breast cancer dynamics under treatment have gained interest for long time \cite{norton1977tumor,enderling2006mathematical,enderling2007mathematical,frieboes2009prediction,roe2011mathematical, yankeelov2013clinically, lai2018modeling, jarrett2019mathematical, lai2019toward, lai2022scalable}. However, modeling of AI treatment in ER-positive breast cancer has received less attention so far. For example, an ODE model was proposed to understand pathway dynamics of ER-positive MCF-7 breast cancer cells under combination of Cdk4/6 inhibition and anti-hormonal therapies, including AI treatment~\cite{he2020mathematical}. Similarly, Chen et al. proposed a mathematical model based on a system of ODEs to understand resistance to AI treatment driven by a shift from estrogen to growth factor receptors \cite{chen2013modeling}. In another study that uses stochastic differential equations and statistical physics techniques, the transitions under AI treatment between three different estrogen sensitive phenotypes were considered \cite{chen2014mathematical}. To explain the dual effect of estrogen inducing both growth and regression of hormone-dependent breast cancer (referred as estrogen paradox), Ouifki and Oke proposed an ODE model and determined conditions to eliminate cancer recurrence for long-term treatment based on stability analysis \cite{ouifki2022mathematical}.
Cancer treatment scheduling optimization by means of OCPs has received considerable attention \cite{schattler2015optimal, jarrett2020optimal, yildiz2018new, akman2018optimal}. For instance, OCPs were proposed to optimise treatment schedules of chemotherapies \cite{de2001mathematical, panetta2003optimal, ledzewicz2022structure}, angiogenic inhibitors \cite{ledzewicz2007antiangiogenic}, cytotoxic and antiangiogenic therapies \cite{colli2021optimal}, immunotherapy via a dendritic cell vaccine \cite{castiglione2007cancer} and combination therapies \cite{ledzewicz2012multi, sharp2020designing}. In addition, resistance to chemotherapy~\cite{costa1992optimal, carrere2017optimization} or combination of chemotherapy with ketogenic diet~\cite{oke2018optimal} were also studied using OCPs. Another recent study investigated the optimal combination of doxorubicin and HER2 targeting drug trastuzumab, for a murine model of human HER2 positive breast cancer~\cite{lima2022optimizing}. To the best of our knowledge, the present work is the first modeling study to account for anti-hormonal treatment using AIs in the presence of drug resistance in an optimal control framework.

The paper is organized as follows: In the following section~\ref{model}, we formulate the dynamical model, prove some basic  properties and we proceed with model calibration. Sec.~\ref{extension} is dedicated to the model extension for AI treatment and resistance. In Sec.~\ref{ocpp}, we formulate the OCP and derive the optimality system. Then, we proceed with results in Sec.~\ref{sim} to compare various relevant scenarios for anti-hormonal treatment. We conclude by discussing the main conclusions, limitations and future directions.

\section{Mathematical model development and calibration }\label{model}
In this section, we develop a basic ODE model for the interaction of ER-positive breast tumor cells, estrogen and fat for the postmenopausal situation, and demonstrate some useful mathematical properties of its solution. Our model can describe the contribution of fat intake differences to estrogen and tumor growth over time. 
We have been inspired by the mice experiment conducted by Hillers et al.~\cite{hillers2018obesity} comparing tumor growth in CD and HFD mice, and we use the data obtained in that study to bring our model closer to reality. 
We proceed by describing the experiments and available data that inspired our model. We then present model equations and the assumptions they are based on. Then, we discuss mathematical properties of the model. Lastly, we explain the details of the model calibration.

\subsection{Experimental data}\label{sec:exp_data}
Hillers et al. investigated the influence of obesity on breast tumor size and stromal cells within the mammary adipose tissue~\cite{hillers2018obesity}. We use data from that study that utilized breast cancer cell line EO771 derived from a spontaneous mammary adenocarcinoma from a C57Bl/6 mouse. EO771 cells are considered to be a model of luminal B breast cancer subtype and are known to respond to anti-estrogens \cite{le2020eo771}. Specifically, mice were fed with CD (10\% kcal from fat, Test Diet 58Y1) or HFD diet (60\% kcal from fat, Test Diet 58Y2). A total of $1 \times 10^6$ EO771 tumor cells were mixed with $2.5 \times 10^5$ adipocytes taken from CD or HFD mice. After pelleting this mixture of cancer and fat cells, it was injected bilaterally into the inguinal mammary glands of 8-week-old female mice fed with CD. In total, we have the data of eleven mice, five where the fat cells come from mice fed with CD and six where fat cells come from mice fed with HFD. For each tumor independently, tumor volumes were measured at days 10, 13 and 15, as depicted in the study~\cite[Fig.~2B]{hillers2018obesity}. In addition, the number of adipocytes at day 15 was quantified, see \cite[Fig.~S2F]{hillers2018obesity}, and we use it to estimate the amount of fat in the tumor tissue. Details are provided in the Supplementary Material~\ref{supp}. Mice were euthanized when tumor reached the humane endpoint of 15 mm in diameter. In summary, we obtained the initial conditions and six independent measures of tumor volume at three time points (days 10, 13 and 15) for each condition (CD and HFD), and two data points for fat volume at day 15, one for each condition (CD and HFD).


\subsection{Model development }
In order to quantify the effect that the fat-induced production of estrogen has on tumor growth, we model the temporal dynamics of tumor volume $T:=T(t)$ (mm$^3$), estrogen concentration $E:=E(t)$ (pg/g) and fat volume $F:=F(t)$ (mm$^3$) in the tumor tissue at time $t$ (days). The model is based on the following six assumptions:
\begin{enumerate}
    \item Tumor volume follows logistic growth \cite{benzekry2014classical} . 
    \item Tumor growth rate depends on the estrogen level \cite{le2020eo771}.
    \item Fatty tissue is the major source of estrogen in the tumor \cite{simpson2003sources}. Circulating estrogen concentrations are proportional to adipose mass in postmenopausal women \cite{marchand2018increased}, so we assume that estrogen is produced by fat at a constant rate.
    \item Estrogen is washed out from the tumor tissue at a constant rate \cite{deshpande1967}.
     \item Tumor cells use fat as an energy resource \cite{hoy2017adipocyte, wang2017mammary}.
     \item While mice are fed CD for 15 days, there is no growth source for fat volume. Therefore, diet-based difference in fat volume (CD vs HFD) are accounted for simply due to the amount of fat volume at day 0. 
     \item In the experiments that we model, most implanted adipocytes survive. Cancer cells are known to produce growth factors and cytokines that support the survival of adipocytes. Therefore, we did not include fat decrease due to adipocyte death.
\end{enumerate}

A flow diagram depicting the interactions between model variables $T$, $E$ and $F$ is presented in Fig.~\ref{Fig_Diagram}(a).

\begin{figure}[ht!]
\centering
\subfloat[][]{\includegraphics[height=0.175\textheight]{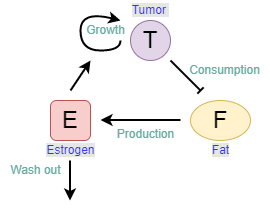}\label{Fig_DiagramA}} \qquad
\subfloat[][]{\includegraphics[height=0.325\textheight]{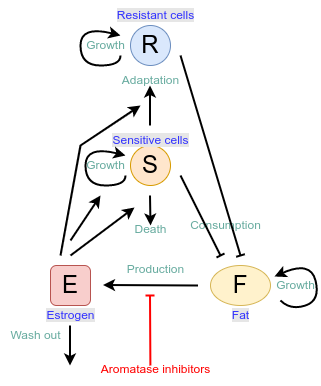}\label{Fig_DiagramB}}
\caption{ \protect\subref{Fig_DiagramA} Modelled interactions between the volume of tumor cells, $T$, fat volume, $F$, and estrogen concentration, $E$, in Eq.~\eqref{Model1}. Tumor volume grows as a consequence of cancer cell proliferation, which is triggered by estrogen. Estrogen is produced by fat and it is washed out. Tumor cells consume fat as energy resource. \protect\subref{Fig_DiagramB} Modelled interactions between the volume of resistant cells, $R$, volume of sensitive cells, $S$, fat volume, $F$, and estrogen concentration, $E$, in Eq.~\eqref{Model2}. The volume of both sensitive and resistant cells grows as a consequence of cell proliferation. While the growth of sensitive cells is triggered by estrogen, that of resistance cells is estrogen independent. Sensitive cells can die under the influence of estrogen or can adapt to low estrogen levels and become resistant. Both sensitive and resistant cells consume fat. Fat volume can change size as a consequence of diet. Estrogen is produced by fat but this production is inhibited by AIs. Estrogen is also naturally washed out. In both diagrams, the lines ending with an arrow represent positive feedback whereas the lines ending with a bar denotes negative feedback.}\label{Fig_Diagram}
\end{figure}

Consequently, we propose the following system of ODEs:
\begin{subequations}\label{Model1}
	\begin{align}
	\underbrace{\dfrac{dT}{dt}}_{change~ in~ tumor~ volume} &= \underbrace{\overbrace{\frac{k_1 E}{a_1 + E}}^{proliferation~rate~triggered~by~estrogen} T \Big(1- m_1 T \Big)}_{logistic~growth~term} , \label{1a}\\ 
	\underbrace{\dfrac{dE}{dt}}_{change~ in~ estrogen~ concentration} &=  \underbrace{rF}_{estrogen~production} - \underbrace{\mu E}_{wash~out}, \label{1b}\\
	\underbrace{\dfrac{dF}{dt}}_{change~ in~ fat~ volume} &= \underbrace{- \alpha TF}_{energy~consumption}, \label{1c}\\
	T(0) &=T_0, \, E(0) = E_{0},  \quad F(0)= F_{0}. \nonumber
	\end{align}%
\end{subequations}
The parameters $k_1, a_1, m_1, r, \mu,\alpha$ and initial conditions $T_0, E_{0}$ and $F_{0}$ are all non-negative real numbers. Eq.~\eqref{1a} represents the tumor logistic growth, where the growth rate is assumed to follow Michaelis-Menten kinetics, $g(E)=\frac{k_1 E}{a_1 + E}$. Parameter $k_1$ is the maximum growth rate for high estrogen levels and $a_1$ is the estrogen concentration at which the growth rate is half-maximum. Parameter $m_1$ is the inverse carrying capacity of the tumor. Eq.~\eqref{1b} models the change in estrogen concentration when it is produced by fat at a rate $r$ and washed out from the tumor tissue at a rate $\mu$. The last equation \eqref{1c} accounts for fat consumption by tumor cells at a rate $\alpha$. The values of these parameters are not known and will be estimated from data.

\subsection{Model properties}
Next we prove that the solution to model~\eqref{Model1} exists, it is unique, non-negative and bounded. These properties will be used later. 

\begin{proposition}\label{Thm1}
Eq.~\eqref{Model1} with non-negative initial conditions has a unique solution that is non-negative and bounded from above for all $t\geq 0$.
\end{proposition}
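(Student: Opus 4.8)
The plan is to invoke the standard existence–uniqueness theory for ODEs on a neighborhood of the non-negative orthant and then to establish the qualitative bounds one variable at a time, exploiting the essentially triangular structure of the system. First I would observe that the right-hand side of Eq.~\eqref{Model1}, viewed as a map $\mathbf{f}(T,E,F)$, is continuously differentiable on the open set $\{(T,E,F):E>-a_1\}$, since the only non-polynomial ingredient is the Michaelis--Menten factor $g(E)=k_1E/(a_1+E)$, whose denominator is bounded away from zero there (using $a_1>0$). As this open set contains the non-negative orthant, the Picard--Lindel\"of theorem yields a unique solution on a maximal interval $[0,t_{\max})$. It then remains to show the solution stays non-negative and bounded, which will in turn force $t_{\max}=\infty$.

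For non-negativity I would argue face by face on the boundary of the orthant, using the special form of each equation. Eq.~\eqref{1c} is linear in $F$ for fixed $T$, so it integrates to $F(t)=F_0\exp\!\big(-\alpha\int_0^t T(s)\,ds\big)\geq 0$, and in particular $\{F=0\}$ is invariant. With $F\geq 0$ in hand, the integrating-factor representation $E(t)=E_0e^{-\mu t}+\int_0^t e^{-\mu(t-s)}rF(s)\,ds$ of Eq.~\eqref{1b} gives $E\geq 0$; equivalently, on the face $E=0$ one has $dE/dt=rF\geq 0$, so no trajectory can exit through it. Since $E\geq 0$ makes $g(E)\geq 0$, the logistic equation \eqref{1a} can be written multiplicatively as $T(t)=T_0\exp\!\big(\int_0^t g(E(s))(1-m_1T(s))\,ds\big)\geq 0$.

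Boundedness then follows from elementary comparison arguments, again using $g(E)\geq 0$ and the non-negativity of all parameters. From $dF/dt=-\alpha TF\leq 0$ the fat volume is non-increasing, so $0\leq F(t)\leq F_0$. For the tumor, the sign of $1-m_1T$ controls the dynamics: whenever $T$ exceeds the carrying capacity $1/m_1$ the right-hand side of \eqref{1a} is non-positive, so a standard invariance argument gives $T(t)\leq\max\{T_0,1/m_1\}$. Finally, inserting $F\leq F_0$ into \eqref{1b} yields $dE/dt\leq rF_0-\mu E$, and the scalar comparison principle bounds $E(t)\leq\max\{E_0,\,rF_0/\mu\}$. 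Because the solution remains in a fixed compact box on $[0,t_{\max})$, it cannot blow up in finite time, so $t_{\max}=\infty$ and all bounds persist for every $t\geq 0$.

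I expect the main point requiring care—more a subtlety than a genuine obstacle—to be the mild circularity between regularity and sign: both the local Lipschitz estimate and the inequality $g(E)\geq 0$ rely on $E$ remaining in $\{E>-a_1\}$, yet establishing $E\geq 0$ presupposes the solution one is differentiating. This is resolved cleanly by performing the sign analysis on the maximal interval furnished by Picard--Lindel\"of and noting that the orthant is forward invariant, so the solution never approaches the singular set $\{E=-a_1\}$ and the bootstrapping closes.
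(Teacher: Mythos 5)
Your proposal is correct and follows essentially the same route as the paper's proof: integrate \eqref{1c} to get $F\geq 0$, use a comparison/variation-of-constants argument for $E\geq 0$, write $T$ in multiplicative exponential form for $T\geq 0$, and then bound $F$ by $F_0$, $T$ via the logistic comparison, and $E$ via $dE/dt\leq rF_0-\mu E$. If anything, your version is slightly more careful than the published one, since you make explicit the maximal-interval bootstrap that upgrades local existence to global existence and you state uniform bounds such as $T(t)\leq\max\{T_0,1/m_1\}$ rather than only $\limsup$-type estimates.
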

\begin{proof}
As the right-hand side of the model~\eqref{Model1} and their partial derivatives are continuous on $\mathbb{R} \times \mathbb{R}^3$, it follows from the Cauchy–Lipschitz theorem that the existence and uniqueness of the solution are guaranteed \cite[Ch.15]{schatzman2002numerical}.

To prove that the solution to Eq.~\eqref{Model1} is non-negative for all $t\geq 0$, we use the method of separation of variables. Firstly, Eq.~\eqref{1c} leads to 
\begin{align}
    F(t) = F(0) \exp \Big( - \int_{0}^{t} \alpha T(s) \, ds \Big) \geq 0.
\end{align}
Since $F$ and $r$ are non-negative, we can rewrite Eq.~\eqref{1b} as
\begin{align}
    \dfrac{dE}{dt} \geq - \mu E.
\end{align}
Thus, $E \geq 0$. Finally, Eq.~\eqref{1a} leads to
\begin{align}
    T(t) = T(0) \exp \Big(  \int_{0}^{t}  \frac{k_1 E(s)}{a_1 + E(s)} \Big(1 - m_1 T(s) \Big) \, ds \Big) \geq 0.
\end{align}
Therefore, $T \geq 0$, $E \geq 0$ and $F \geq 0 $ for all $t \geq 0$. 

To prove that the solution to Eq.~\eqref{Model1} is bounded from above, we observe from Eq.~\eqref{1a} that
\begin{align}
    \frac{dT}{dt} \leq k_1 T \Big(1- m_1 T \Big).
\end{align}
Then,
\begin{align}
   \lim_{t \rightarrow \infty} \sup m_1 T(t) \leq \frac{1}{m_1}.
\end{align}
Since $\frac{dF}{dt} \leq 0$, $F$ stays constant at $F_0$ or decreases. Then, $F(t) \leq F_0$. Finally, Eq.~\eqref{1b} leads to
\begin{align}
    \frac{dE}{dt} \leq  r F_0 - \mu E,
\end{align}
and 
\begin{align}
    \lim_{t \rightarrow \infty} \sup E(t) \leq \frac{r F_0}{\mu}.
\end{align}
Thus, the solution is bounded from above.
\end{proof}

\subsection{Model calibration}


In this section, we make use of the experimental data described in Sec.~\ref{sec:exp_data} to inform our basic model.  Direct measures of model parameters are not available in this experimental setup, and we do not have enough data to do formal statistical inference for all parameters. The main argument we used to fix some parameters was identifiability of the remaining free parameters. To that goal, we decided to fix three of them to reasonable values and made extra assumptions to fix the initial estrogen concentration and fat volume. We then use the available data to calibrate the rest of the parameters for which we lack any information and show that the problem is practically identifiable by using profile likelihood \cite{kreutz2012likelihood}. 



The initial amount of fat in the tumor tissue was measured only at day 15. For simplicity, we assume that the level of fat under CD stays constant and it has not changed since the beginning of the experiment (see \cite[Fig.~S2F]{hillers2018obesity}). We acknowledge this is a limitation and an initial fat measurement would have made our results more solid. As estrogen is mainly produced by fat, we also assume that estrogen concentration is proportional to fat volume at baseline. Indeed, estrogen concentration in mice under different, but comparable, conditions was measured between 150-1500 pg/g \cite[Fig.2]{yue1999aromatase}. For estrogen concentration in our model to lie within those measures, we assume that the ratio of estrogen concentration to fat volume is around 3.4.

We then find reasonable values for parameters $m_1$ and $\mu$. We obtain the half-life of estrogen in breast tumor tissue from~\cite{deshpande1967}, $t_{1/2}$=2.8 h. Therefore, $\mu$, that represents the washout rate of estrogen from tumor tissue, can be computed as $\mu=\ln(2)/t_{1/2}=0.25$~h$^{-1}=5.94$~day$^{-1}$. In the experiments, mice were euthanized after the tumor reached 15 mm in diameter, corresponding to a volume of approximately 1767mm$^3$ assuming a spherical tumor. We simply set $m_1=1/2000$~mm$^{-3}$  equally for CD and HFD as a larger value than the highest tumor volume in the data set. Fixing $m_1$ and $\mu$ still did not solve the non-identifiability problem, but we discovered that fixing $r$ in addition solved this issue. Parameter $r$ is fixed as 20 pg/g mm$^{-3}$ day$^{-1}$ based on the assumption that estrogen is at the steady-state in the beginning of the experiment which leads to $E = \frac{rF}{\mu}$. Estrogen concentration roughly satisfies $150 \leq E= \frac{rF}{\mu} \leq 1500 $ \cite[Fig.2]{yue1999aromatase}. By multiplying both sides of this inequality by $\mu=5.94$, we reach $891 \leq r F \leq 8910$. We divide both sides by $F_{CD}(0)=50$ and $F_{HFD}(0)=360$ separately, that results in two inequalities $17.82 \leq r \leq 178.2$ and $2.475 \leq r \leq 24.75$. The intersection of these inequalities gives a range for the parameter $r$ which is $17.82 \leq  r \leq 24.75 $. Therefore, we simply choose $r=20$. 

We perform model calibration and profile likelihood calculations in Data2Dynamics \cite{raue2015data2dynamics, raue2013lessons}. We fixed the lower and upper bounds for the parameters as $10^{-7}$ and $10^4$ in the optimization problem, respectively. Based on the method of profile likelihood \cite{kreutz2012likelihood}, our model is practically identifiable (See.~\ref{prof_like}). We list the obtained parameter values in Table~\ref{Tab1}. We perform sensitivity analysis for all the parameters in Sec.~\ref{supp}.

\begin{table}[h!]
\begin{center}
\caption{Values of the parameters in the Eq.~\eqref{Model1}.}\label{Tab1}
\begin{tabular}{llll}
\toprule
Parameter & Description & Units & Value \\
\midrule
$m_1$ & Inverse carrying capacity of tumor  & mm$^{-3}$ & 1/2000   (assumed)\\\hline
$\mu$ & Estrogen washout rate & day$^{-1}$ & 5.94   \cite{deshpande1967}\\\hline
$k_1$  & Tumor growth rate  & day$^{-1}$ & 0.55 (calibrated) \\\hline
$a_1$ & Half maximum estrogen threshold  & pg/g    & 43 (calibrated)\\ \hline
$r$ & Estrogen production rate   &pg/g mm$^{-3}$ day$^{-1}$ &  20 (assumed)  \\ \hline
$\alpha$ & Fat consumption rate & day$^{-1}$ mm$^{-3}$ & 1.7e-06  (calibrated) \\ \hline
$T_{0}$ & Initial tumor volume  & mm$^{3}$  & 1   \cite{hillers2018obesity}\\ \hline
$E_{0}$ & Initial estrogen concentration & pg/g  & 170, CD (estimated)  \\ & & & 1200, HFD (estimated) \\ \hline
$F_{0}$ & Initial fat volume  &   mm$^{3}$  & 50, CD (assumed)\\ & & & 360, HFD (assumed) \\ 
\botrule
\end{tabular}
\end{center}
\end{table}





Fig.~\ref{Fig1} shows the simulation results for CD (left panel) and HFD (right panel) obtained with the parameters in Table~\ref{Tab1}. The $y$ left-axes correspond to tumor or fat volume, whereas the $y$ right-axes denote the estrogen level. Data points with error bars for tumor and fat volume are marked with red circles and black crosses, respectively. We observe that simulation results for tumor and fat volume agree well with the data, whereas estrogen level stays within a biologically meaningful interval. We observe that the initial estrogen level and fat volume are higher for HFD than CD (see, Table~\ref{Tab1}.). Temporal evolution of estrogen level and fat volume is similar for both diet types, since fat is assumed as the source of estrogen production. Indeed, tumor associated with HFD increases faster than for the other, due to more estrogen release from HFD fat volume. 
\begin{figure}[h!]
	\centering
	$
	\begin{array}{cc}
	\includegraphics[height=0.4\textwidth]{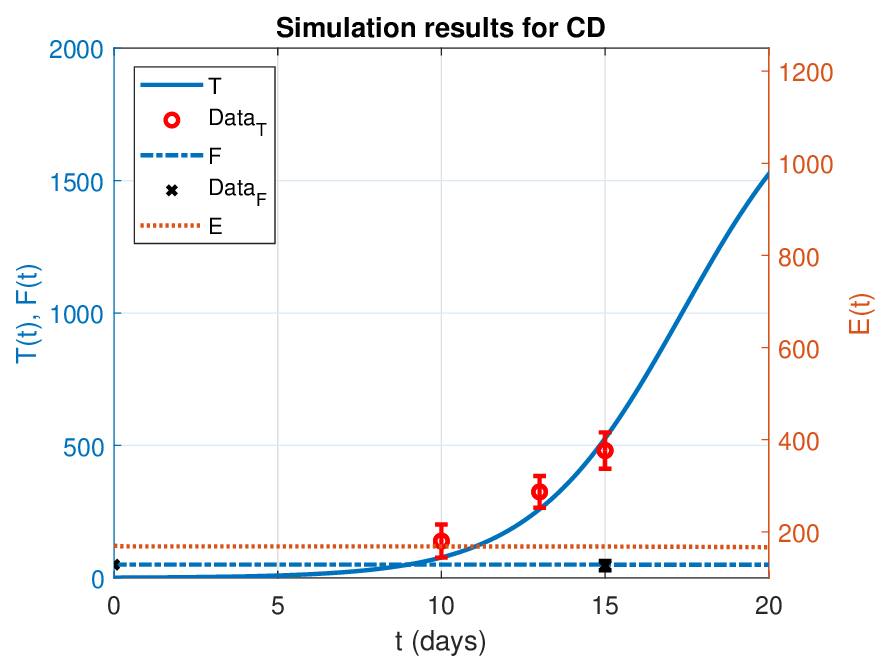}&
	\includegraphics[height=0.4\textwidth]{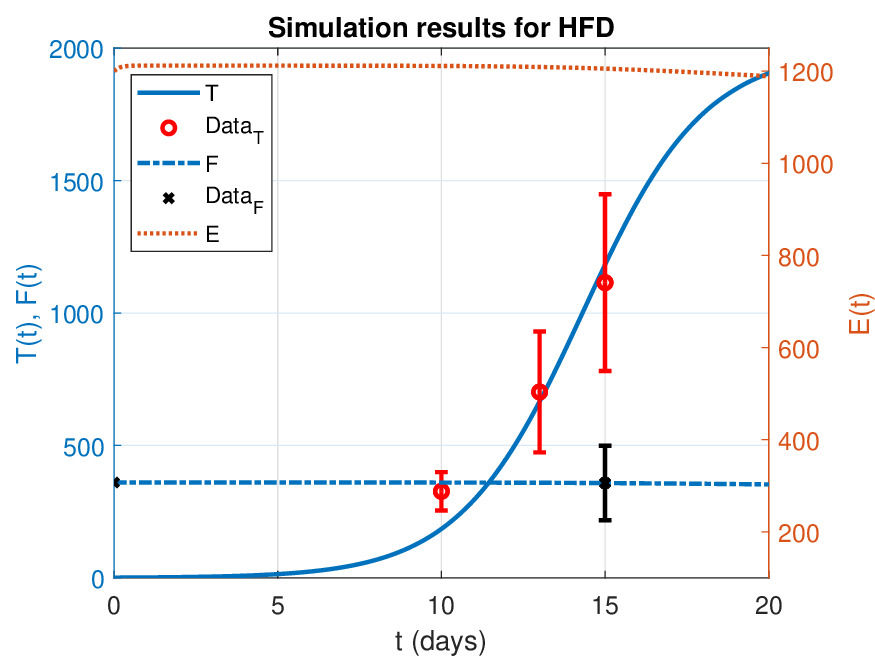}
	\end{array}
	$
	\caption{Simulation results of the Eq.~\eqref{Model1} for CD (\textit{left}) and HFD (\textit{right}) with the data points. Left axis corresponds to tumor size $T(t)$ and fat volume $F(t)$, right axis denotes estrogen concentration $E(t)$ over time $t$.}\label{Fig1}
\end{figure}

Original model \eqref{Model1} expresses the changing dynamics of tumor volume, estrogen level and amount of fat in case of no treatment and simulation results agree well with the available data. The next step is to extend this model to account for AI treatment by considering sensitive and resistant tumor subpopulations. In this way we will be able to study drug resistance to endocrine therapy for ER-positive breast cancer. 

\section{Model extension for resistance to aromatase inhibitor treatment}\label{extension}
Aromatase inhibitors, despite of being an effective treatment choice for ER-positive breast cancer, may suffer from drug resistance \cite{chumsri2011aromatase, ma2015mechanisms}. 
To investigate different treatment schedules, including constant, alternating and optimal anti-hormonal treatment, we consider tumor heterogeneity in terms of sensitive and resistant subpopulations under the following assumptions: 
\begin{enumerate}
    \item Breast cancer cells are either sensitive or resistant to estrogen deprivation with AIs. In reality, there could be more than two tumor subpopulations, since development of resistance is considered as a progressive mechanism and cells may shift form one stage to another over time \cite{normanno2005mechanisms}. For simplicity, we assume that there are only two tumor subpopulations, called sensitive and resistant.
    \item Both sensitive and resistant populations follow logistic growth \cite{benzekry2014classical}. 
    \item Growth of sensitive cells is triggered by estrogen~\cite{doisneau2003estrogen}.
    \item Sensitive cells die under low estrogen concentrations~\cite{doisneau2003estrogen}.
    \item Sensitive cells adapt to low estrogen levels and become resistant cells~\cite{chen2013modeling,chen2014mathematical}. 
    \item Resistant cells do not die under low estrogen concentrations~\cite{chen2013modeling,chen2014mathematical}.
    \item Fat volume follows logistic growth \cite{ku2016mathematical}.
     \item Both sensitive and resistant cells consume fat as energy resource \cite{ku2016mathematical}.
\end{enumerate}

Consequently, Eq.~\eqref{Model1} is extended with the sensitive cell population $S:=S(t)$ (mm$^3$) and the resistant cell population $R:=R(t)$ (mm$^3$) in the tumor tissue at time $t$ (days):
\begin{subequations}\label{Model2}
    \begin{align}
	\underbrace{\dfrac{dS}{dt}}_{change~in~sensitive~cell~population} &= \underbrace{ \frac{k_1 E}{a_1 + E} S\Big(1- m_1 ( S + \eta R ) \Big)}_{logistic \, growth \, term} -  \underbrace{\frac{c a_2^l}{a_2^l + E^l} S}_{death \, term} - \underbrace{\frac{c a_3^l}{a_3^l + E^l} S}_{adaptation \, term} ,  \label{2a}\\ 
	\underbrace{\dfrac{dR}{dt}}_{change~in~resistant~cell~population} &= \underbrace{k_3 R \Big(1- m_1 ( S + \eta R ) \Big) }_{logistic \, growth \, term  } + \underbrace{\frac{c a_3^l}{a_3^l + E^l} S}_{adaptation \, term} , \label{2b}\\ 
	\underbrace{\dfrac{dE}{dt}}_{change~in~estrogen~concentration} &= \underbrace{ pr F}_{estrogen \, production} - \underbrace{\mu E}_{wash \, out}, \label{2c}\\
	\underbrace{\dfrac{dF}{dt}}_{change~in~fat~volume} &= \underbrace{k_2 F (1-m_2 F)}_{logistic \, growth \, term} - \underbrace{ \alpha (S+R)F}_{energy \, consumption} \label{2d},\\
	S(0) &=S_0, \, R(0)=R_0, \, E(0) = E_{0}, \,  F(0)= F_{0}, \nonumber
	\end{align}%
\end{subequations}
with the non-negative initial conditions $S_0$, $R_0$, $E_0$ and $F_0$. Eq.~\eqref{2a} expresses the logistic growth of sensitive cells over time together with death and adaptation terms. Parameter $m_1$ is the inverse of maximum tumor size and $\eta$ is the competition parameter scaling inhibition of sensitive cells' growth by resistant cells. Sensitive cells die if estrogen level is smaller than $a_2$ while they adapt to estrogen level below $a_3$ and become resistant. Parameter $c$ is the maximum death rate and $l$ denotes Hill's coefficient. Eq.~\eqref{2b} models evolution of resistant cells with the growth rate $k_3$. Eq.~\eqref{2c} stands for dynamics of estrogen level where the parameter $p$, $0<p\leq 1$, reduces the effect $r$ to $p \cdot r$ due to aromatase inhibitors. 
Eq.~\eqref{2d} models the change in fat volume with logistic growth so that effect of fat growth to anti-hormonal treatment could be investigated. Parameters $k_2$ and $m_2$ are the growth rate of fat and inverse carrying capacity of fat, respectively. The carrying capacity could model how the body is prone to accumulate fat depending on the life style or other metabolic conditions. In addition, both sensitive and resistant cells consume fat as energy resource at the rate $\alpha$. A diagram depicting the interactions between the extended model variables $S$, $R$, $E$ and $F$ is presented in Fig.~\ref{Fig_Diagram}(b).

We assume that the parameters that we calibrated in the basic model are not affected by the treatment and we use them in the extended model. As we do not have data under treatment, we explore the effect that new parameters have by testing different values.

\subsection{Model properties}

\begin{proposition}\label{Thm4}
Eq.~\eqref{Model2} with non-negative initial conditions has a unique solution that is non-negative and bounded from above for all $t\geq 0$.
\end{proposition}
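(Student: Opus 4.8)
The plan is to follow the same three-part strategy used for Proposition~\ref{Thm1}, now adapted to the four-dimensional system~\eqref{Model2}, exploiting that each equation is either purely multiplicative in its own variable or carries a manifestly signed source term. For existence and uniqueness, I would first note that the Hill/Michaelis--Menten right-hand sides have denominators $a_1+E$, $a_2^l+E^l$ and $a_3^l+E^l$ that remain strictly positive on the physically relevant domain $\{E\ge 0\}$ (with $a_1,a_2,a_3>0$), so the vector field and its partial derivatives are continuous there; the Cauchy--Lipschitz theorem then gives a unique solution on a maximal interval $[0,T_{\max})$. As in Proposition~\ref{Thm1}, global existence ($T_{\max}=\infty$) will follow a posteriori once the orbit is confined to a fixed bounded region.

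For non-negativity I would argue variable by variable, ordering the steps so that each uses only facts already established. Equation~\eqref{2d} can be written $dF/dt=F\,g_F$ with $g_F=k_2(1-m_2F)-\alpha(S+R)$, so separation of variables yields $F(t)=F_0\exp(\int_0^t g_F\,ds)\ge 0$; similarly \eqref{2a} reads $dS/dt=S\,g_S$, giving $S(t)=S_0\exp(\int_0^t g_S\,ds)\ge 0$ irrespective of the sign of the bracketed factor. With $F\ge 0$, equation~\eqref{2c} gives $dE/dt\ge-\mu E$, hence $E(t)\ge E_0e^{-\mu t}\ge 0$. The only structurally different equation is \eqref{2b}, which is not purely multiplicative because of the adaptation source $\tfrac{ca_3^l}{a_3^l+E^l}S$; writing $dR/dt=\phi R+\psi$ with $\phi=k_3(1-m_1(S+\eta R))$ and $\psi=\tfrac{ca_3^l}{a_3^l+E^l}S\ge 0$ (using $S\ge 0$), the integrating-factor representation $R(t)=e^{\int_0^t\phi}\big(R_0+\int_0^t\psi\,e^{-\int_0^s\phi}\,ds\big)\ge 0$ finishes this part, since every ingredient is non-negative.

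For boundedness I would use invariant-region comparisons. For $S$: if $S>1/m_1$ then $1-m_1(S+\eta R)\le 1-m_1S<0$, so the growth term is $\le 0$ while the death and adaptation terms are $\le 0$, forcing $dS/dt\le 0$; hence $S(t)\le\max\{S_0,1/m_1\}$. For $F$: dropping the non-positive consumption term gives the logistic comparison $dF/dt\le k_2F(1-m_2F)$, so $F(t)\le\max\{F_0,1/m_2\}=:F_{\max}$, and feeding this into \eqref{2c} with $p\le 1$ yields $\limsup_{t\to\infty}E(t)\le prF_{\max}/\mu$, so $E$ is bounded. The delicate case is $R$: the additive adaptation source rules out a purely multiplicative bound, so boundedness must come from the competition term. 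Using the already-established bound on $S$ to control the source by a constant $C_S$, and $S\ge 0$ to drop the $-m_1S$ contribution, one obtains $dR/dt\le k_3R(1-m_1\eta R)+C_S$, whose right-hand side is a downward parabola in $R$ that is negative for large $R$, giving a finite invariant bound.

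I expect this last step to be the main obstacle. It is the only point where the components are genuinely coupled through an a priori estimate (the $R$-bound leans on the $S$-bound), and it quietly requires $\eta>0$: without self-competition among resistant cells the estimate degenerates to $dR/dt\le k_3R+C_S$, which permits exponential growth, so the stated boundedness would fail. Once $S$, $R$, $E$ and $F$ are all confined to a fixed compact set on $[0,T_{\max})$, the standard continuation criterion upgrades local to global existence, completing the proof.
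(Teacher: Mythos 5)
Your proof is correct, but the key step---boundedness of the tumor compartments---takes a genuinely different route from the paper. The paper never bounds $S$ and $R$ separately: it forms the weighted sum $S+\eta R$, in which the adaptation term $\frac{ca_3^l}{a_3^l+E^l}S$ leaving $S$ cancels against (or, for $\eta\le 1$, dominates) the same term entering $\eta R$, so the sum obeys a single logistic-type inequality $\frac{d}{dt}(S+\eta R)\le \max\{k_1,k_3\}(S+\eta R)\bigl(1-m_1(S+\eta R)\bigr)$, giving $\limsup_{t\to\infty}(S+\eta R)\le 1/m_1$ and hence boundedness of both populations at once. You instead proceed sequentially: an invariant-region bound $S\le\max\{S_0,1/m_1\}$, then a crude constant bound $C_S$ on the adaptation source, then the quadratic inequality $dR/dt\le k_3R(1-m_1\eta R)+C_S$. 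The trade-off is instructive. The paper's sum trick preserves the transfer structure (adaptation merely moves volume from $S$ to $R$), so it needs no self-limitation of $R$ beyond $\eta>0$ and works even if $k_3=0$; but as written it silently requires $\eta\le 1$ (exact cancellation at the paper's value $\eta=1$), since for $\eta>1$ the displayed inequality fails. Your argument, by discarding the transfer structure, requires $k_3>0$ in addition to $\eta>0$ (with $k_3=0$ your final estimate degenerates to $dR/dt\le C_S$ and gives only linear growth), but it covers $\eta>1$, yields explicit for-all-$t$ bounds rather than a limsup statement, and your closing continuation argument makes the global-existence claim more rigorous than the paper's, which asserts it by analogy. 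You correctly flag that $\eta>0$ is essential---this is true for both proofs, and the proposition genuinely fails for $\eta=0$---but you should flag $k_3>0$ as an additional hypothesis of your route; under the paper's parameter choices ($\eta=1$, $k_3>0$) both arguments are valid. Your non-negativity part (exponential representations for $S$, $F$; comparison for $E$; variation of constants for $R$) matches the paper's essentially step for step.
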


\begin{proof}

Existence and uniqueness of the solution is standard and analogous to Proposition~\ref{Thm1}. Thus, we prove here that the solution to Eq.~\eqref{Model2} is non-negative and bounded from above for $t \geq 0$.
Similar to Theorem~\ref{Thm1}, we can prove that $E, F \geq 0$ for $t\geq 0$ by the variation of constants formula. For Eq.~\eqref{2a},
we have
\begin{align}
    S(t) = S(0) \exp \Big\{ \int_{0}^t  \Big(\frac{k_1 E(s)}{a_1 + E(s)} \Big(1- m_1 ( S(s)+ \eta R(s) ) \Big) - \frac{ca_2^l}{a_2^l + E^l(s)} - \frac{ca_3^l}{a_3^l + E^l(s)}\Big)  \Big \} \geq 0.
\end{align}
Since $S \geq 0$ for $t\geq 0$, Eq.~\eqref{2b} can be written as
\begin{align}
    \dfrac{dR}{dt} \geq k_3 R \Big(1- m_1 ( S(s)+ \eta R(s) ) \Big).
\end{align}
Then, we get
\begin{align}
    R(t) \geq R(0)  \exp \Big\{ \int_0^t k_3 \Big(1- m_1 ( S(s)+ \eta R(s) ) \Big) \; ds \Big\} \geq 0.
\end{align}

We can prove that $F$ and $E$ are bounded from above similar to Theorem~\ref{Thm1}. On the other hand, using Eq.~\eqref{2a}-\eqref{2b}, we obtain the sum
\begin{align}
    \frac{dS}{dt} + \eta  \frac{dR}{dt} 
    &\leq \underbrace{ \frac{k_1 E}{a_1 + E}}_{\leq k_1} S\Big(1- m_1 ( S(s)+ \eta R(s) ) \Big) + \eta  k_3 R \Big(1- m_1 ( S(s)+ \eta R(s) ) \Big) \nonumber\\
    &\leq (k_1 S + \eta k_3 R) \Big(1- m_1 ( S(s)+ \eta R(s) ) \Big) \nonumber\\
    &\leq \max\{ k_1, k_3 \} (S + \eta R) \Big(1- m_1 ( S(s)+ \eta R(s) ) \Big).
\end{align}
Thus, 
\begin{align}
    \lim_{t \rightarrow \infty} \sup  (S(t) + \eta R(t)) \leq  1/ m_1.
\end{align}
Since $S$ and $R$ are non-negative, it means that $S$ and $R$ are bounded above. Then, we complete the proof. 
\end{proof}

\subsection{Treatment modelling}
We will investigate differences between constant, intermittent and optimal anti-hormonal treatment. Constant treatment is implemented through the parameter $0 \leq p \leq 1$ in Eq.~\ref{Model2}. The value $p=1$ corresponds to no estrogen deprivation treatment and smaller values of $p$ models AI treatment with inhibition of estrogen production.

Alternating treatment refers to a pre-scheduled treatment scenario with $u_{I}:=u_{I}(t)$ and it is implemented by modifying Eq.~\eqref{2c} to
\begin{subequations}
\begin{align}
    \dfrac{dE}{dt} &=  (1-u_{I}) r F - \mu E,
\end{align}
where 
\begin{align}
    u_{I}  =
    \begin{cases}
    u_b, \hbox{ where } 0 \leq u_b < 1, \quad \hbox{if treatment is applied}, \\
    0, \quad \hbox{ else},
    \end{cases}
\end{align}
\end{subequations}

In the next section, an OCP is constructed to investigate the optimal value of $p$ as a time-dependent function, and results obtained with the optimal endocrine therapy are compared with the constant and alternating treatment.

\section{Optimal control problem for anti-hormonal treatment}\label{ocpp}
We aim to investigate optimal AI treatment schedules that minimize the total number of cancer cells together with the pharmaceutical intervention over a prespecified time interval $[t_{tr},t_{f}]$. We do not include an equation representing the drug as often done for optimal chemotherapy scheduling in the literature (see, for example, \cite{de2008optimal,sharma2016analysis}). Instead, we model the effect of AI treatment through a continuous control function $u:=u(t)$. AIs act by lowering the estrogen production, so we replace the parameter $p$ in Eq.~\eqref{2c} by the function $1-u$. 


We formulate the OCP as follows: minimize the cost functional
\begin{align}\label{ocp}
 \mathcal{J}(u) = \int_{t_{tr}}^{t_{f}} ( \omega_S S + \omega_R R +  \frac{\omega_u}{2} u^2 ) \; dt,
\end{align}
subject to
\begin{subequations}\label{Model3}
    \begin{align}
	\dfrac{dS}{dt} &= \frac{k_1 E}{a_1 + E} S\Big(1- m_1 ( S+ \eta R) \Big) -  \frac{ca_2^l}{a_2^l + E^l} S - \frac{ca_3^l}{a_3^l + E^l} S ,  \label{3a}\\ 
	\dfrac{dR}{dt} &= k_3 R \Big(1- m_1 ( S+ \eta R ) \Big) + \frac{ca_3^l}{a_3^l + E^l} S, \label{3b}\\ 
	\dfrac{dE}{dt} &=  (1-u) r F - \mu E, \label{23c}\\
	\dfrac{dF}{dt} &= k_2 F (1-m_2 F) - \alpha (S+R)F \label{3d},\\
	S(0) &=S_0, \, R(0)=R_0, \, E(0) = E_{0}, \,  F(0)= F_{0}, \nonumber
	\end{align}
\end{subequations}	
where,
\begin{align}
    \mathcal{U} = \{ u \, \mid u \hbox{ is \, measurable,} \, u_a \leq u \leq u_b, \hbox{ for all } t \in [t_{tr}, t_{f}], t_{tr} \geq 0, t_{f} >0 \}.
\end{align}

Our aim is to find an optimal control $u^{*}$ such that $\mathcal{J}(u^{*}) = \min_{u \in \mathcal{U}} \mathcal{J}(u)$.\\

We note that constructions of linear or quadratic cost functional in the control function $u$ results in not only biologically but also mathematically different interpretations. While quadratic OCPs have a single extremum and result in continuous controls, linear OCPs result in bang-bang controls and mathematical analysis becomes more complicated due to singular or bang-bang controls that result in non-differentiable solutions curves. We refer readers to the following studies for a detailed comparison \cite{ledzewicz2004comparison, sharp2019optimal, ledzewicz2020role}. In addition, the parameters $\omega_S$, $\omega_R$ and $\omega_u$ in Eq.~\eqref{ocp} can be set to balance the size of the different terms.

In the present case, inclusion of the term $u^2$ in the Eq.~\eqref{ocp} is justified by the treatment side effects. Side effects of AI include from hot flushes to cardiovascular events, vaginal bleeding and bone loss \cite{osborne2005aromatase, cuzick2005aromatase, hadji2010guidelines}. Our quadratic choice reflects the fact that the increase in side effects is negligible for small amounts of therapy and that side effects increase as function of u, rather than increasing at a constant rate as in the linear control.

\begin{theorem}\label{thm6}
There exists an optimal control $u^{*}$ with a corresponding solution $(S^{*}, R^{*}, E^{*}, F^{*})$ to the model~\eqref{Model3} with non-negative initial conditions that minimizes \eqref{ocp} over $\mathcal{U}$.
\end{theorem}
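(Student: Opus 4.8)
The plan is to apply the classical existence theorem for optimal controls (in the style of Fleming and Rishel), which reduces the problem to verifying a short list of structural hypotheses on the state system and on the cost functional. First I would record that the admissible set $\mathcal{U}$ is nonempty (for instance $u \equiv u_a$ is admissible) and that, together with its state trajectory, it yields a finite value of $\mathcal{J}$; this guarantees that $\inf_{u \in \mathcal{U}} \mathcal{J}(u)$ is finite and that a minimizing sequence exists. I would also note that $\mathcal{U}$ is a closed, bounded and convex subset of $L^2(t_{tr}, t_f)$, since it consists of measurable functions confined pointwise to the interval $[u_a, u_b]$.

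Next I would establish the a priori bounds needed for compactness. By Proposition~\ref{Thm4}, for every admissible control the solution $(S, R, E, F)$ of \eqref{Model3} is non-negative and bounded above on $[t_{tr}, t_f]$, uniformly in $u$, since the bounds derived there do not depend on the control (replacing $p$ by $1-u$ only strengthens the sign estimates). Consequently the right-hand side of \eqref{Model3} is bounded on the relevant region, so each state component has a uniformly bounded derivative and the family of state trajectories is equi-Lipschitz, hence equicontinuous.

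Then comes the compactness-and-limit argument. Taking a minimizing sequence $u^n$ with $\mathcal{J}(u^n) \to \inf \mathcal{J}$, I would extract a subsequence with $u^n \rightharpoonup u^*$ weakly in $L^2$; because $\mathcal{U}$ is closed and convex it is weakly closed, so $u^* \in \mathcal{U}$. By the Arzel\`a--Ascoli theorem the associated states $(S^n, R^n, E^n, F^n)$ converge uniformly, along a further subsequence, to some $(S^*, R^*, E^*, F^*)$. I would pass to the limit in the integrated (mild) form of \eqref{Model3} to identify this limit as precisely the state generated by $u^*$: the saturating nonlinearities $k_1 E/(a_1+E)$ and $c a_2^l/(a_2^l + E^l)$ and the products $SF$, $RF$ pass through under uniform convergence, while the only control-dependent term, $(1-u) r F$, is a product of the weakly convergent $u^n$ and the uniformly convergent $F^n$, so that $u^n F^n \rightharpoonup u^* F^*$, which is enough inside the time integral.

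Finally, lower semicontinuity closes the argument. Since the integrand $\omega_S S + \omega_R R + \tfrac{\omega_u}{2} u^2$ is convex in $u$ and the state part converges strongly, $\mathcal{J}$ is sequentially weakly lower semicontinuous, giving $\mathcal{J}(u^*) \le \liminf_n \mathcal{J}(u^n) = \inf_{u\in\mathcal{U}} \mathcal{J}(u)$, so $u^*$ is optimal. The coercivity condition demanded by the abstract theorem holds trivially, because $S, R \ge 0$ force the integrand to be bounded below by $\tfrac{\omega_u}{2} u^2$ (i.e. $\beta = 2$ with constant $\omega_u/2$). The step I expect to be the main obstacle is the limit passage in the state equations --- in particular justifying convergence of the bilinear control term $u^n F^n$ and of the saturating Hill-type nonlinearities --- rather than the lower-semicontinuity step, which is routine once convexity in $u$ is in hand.
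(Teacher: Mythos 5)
Your proposal is correct, but it takes a genuinely different route from the paper. The paper does not run the direct method itself: it invokes the abstract existence result of Fleming and Rishel (their Corollary~4.1) and merely verifies its hypotheses --- nonemptiness of the set of admissible control--state pairs (citing a standard existence theorem for ODEs with measurable controls), closedness and convexity of $\mathcal{U}$, continuity of the right-hand side of \eqref{Model3} together with a linear growth bound of the form $\lvert \vec{f}(t,\vec{X},u)\rvert \leq C(\lvert \vec{X}\rvert + \lvert u \rvert)$, and convexity plus quadratic coercivity of the integrand in \eqref{ocp}. You instead unfold the machinery hidden inside that citation: minimizing sequence, weak $L^2$ compactness of the controls, uniform-in-control a priori bounds from Proposition~\ref{Thm4} giving equicontinuity of the states, Arzel\`a--Ascoli, identification of the limit state via the integrated equations, and weak lower semicontinuity of the cost. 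Your version is self-contained and makes transparent exactly where the structure of the problem is used --- in particular that the dynamics depend affinely on $u$ (so that the bilinear term $u^n F^n$ passes to the weak limit) and that the cost is convex in $u$; these are precisely the features that the abstract theorem's convexity hypotheses encode, and your route also dispenses with the global linear-growth estimate, since the a priori bounds already yield compactness. What the paper's route buys is brevity and reliance on a standard reference. The only point you should make explicit, which the paper handles by citation, is that for each merely measurable admissible control a (Carath\'eodory) solution of \eqref{Model3} exists and is unique on $[t_{tr},t_f]$; this is needed both to ensure $\mathcal{J}$ is defined on all of $\mathcal{U}$ and to conclude that the limit trajectory is \emph{the} state generated by $u^{*}$.
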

\begin{proof}
The proof is based on several steps according to the study of Fleming and Rishel \cite[Corollary~4.1]{fleming2012deterministic}. Firstly, we observe that the coefficients in Eq.~\eqref{Model3} and its solution are bounded on a finite time interval, so the admissible control set $\mathcal{U}$ and the corresponding solution with initial conditions are non-empty \cite[Thm~9.2.1.]{lukes1982differential}. Secondly, the admissible control set $\mathcal{U}$ is closed and convex. In addition, the right-hand side of the system~\eqref{Model3}, namely $\vec{f}(t,\vec{X},u)$ with $\vec{X}=(S,R,E,F)^{T}$, is continuous, since the system has positive parameters and the non-negative solution by Proposition~\ref{Thm4}. Indeed, it is bounded above by a linear combination of the bounded control and the state as 
\begin{align}
    \mid \vec{f}(t,\vec{X},u) \mid
    &= 
  \Big \lvert \begin{pmatrix}
\frac{k_1 E}{a_1 + E} S\Big(1- m_1 ( S+ \eta R ) \Big) -  \frac{ca_2^l}{a_2^l + E^l} S - \frac{ca_3^l}{a_3^l + E^l} S \\
k_3 R \Big(1- m_1 ( S+ \eta R ) \Big) + \frac{ca_3^l}{a_3^l + E^l} S \\
(1-u) r F - \mu E \\
k_2 F (1-m_2 F) - \alpha (S+R)F
    \end{pmatrix} \Big \rvert \nonumber \\
    &\leq 
\lvert \begin{pmatrix}
k_1 & 0 & 0 &0 \\
0 & k_3 & 0 & 0 \\
0 & 0 & -\mu & r\\
0 & 0 & 0 & k_2 
\end{pmatrix}  
\begin{pmatrix}
S\\
R\\
E \\
F
\end{pmatrix} 
\rvert
+
\lvert \begin{pmatrix}
0\\
0\\
\frac{r}{m_2} u \\
0
\end{pmatrix}  
\rvert \nonumber\\
&\leq C( \mid \vec{X} \mid + \mid u \mid),
\end{align}
due to bounded solution (by Proposition~\ref{Thm4}) and positive parameters in the model for some positive constant $C$. On the third line, we use the relation
\begin{align}
    (1-u)rF - \mu E \leq (1+u)rF - \mu E \leq rF - \mu E + \frac{r}{m_2}u.
\end{align}
The integrand of the objective functional is convex on $\mathcal{U}$ due to the quadratic term. Indeed, it is bounded as
\begin{align}
    \omega_S S + \omega_R R + \frac{\omega_u}{2} u^2
    \geq  \frac{\omega_u}{2} u^2
    \geq - \hat{C} + \frac{\omega_u}{2} u^2,
\end{align}
with some positive constant $\hat{C}$. Thus, we can conclude that an optimal control $u^{*}$ exists.
\end{proof}

\begin{theorem}\label{thm7}
Given an optimal control $u^{*}$ and solution to the system~\eqref{Model3} for \eqref{ocp}, there exist adjoint variables $\lambda_i := \lambda_i(t)$ for $1 \leq i \leq 4$ such that
\begin{subequations}\label{adjoint1}
\begin{align}
\frac{d \lambda_1}{dt} 
&= - w_S - \lambda_1 \left\{ \frac{k_1 E}{a_1 + E} \Big(1 - m_1 ( 2S + \eta R ) \Big) - \frac{ca_2^l}{a_2^l + E^l} - \frac{ca_3^l}{a_3^l + E^l} \right\} \nonumber\\
&+ \lambda_2 \left\{ m_1 k_3 R + \frac{ca_3^l}{a_3^l + E^l}  \right\} + \lambda_4 \alpha F, \\
\frac{d \lambda_2}{dt} 
&=  - w_R + \lambda_1 \left\{ \frac{k_1 m_1 \eta E S }{(a_1 + E)}  \right\} 
- \lambda_2 \left\{  k_3 \Big(1- m_1 (S + 2 \eta R)\Big)  \right\} + \lambda_4 \alpha F, \\
\frac{d \lambda_3}{dt} 
&= - \lambda_1 \left\{ S \Big( 1- m_1 ( S(s)+ \eta R(s) )\Big)  \frac{ k_1 a_1 }{(a_1 + E)^2}  
+ \Big( \frac{ca_2^l}{(a_2^l + E^l)^2} + \frac{ca_3^l}{(a_3^l + E^l)^2} \Big) l E^{l-1}S  \right\} \nonumber\\
&+ \lambda_2 \left\{ \frac{ca_3^l}{(a_3^l + E^l)^2} l E^{l-1} S \right\} + \lambda_3 \mu, \\
\frac{d \lambda_4}{dt} 
&= - \lambda_3 (1-u^{*})r - \lambda_4 (k_2 - 2k_2 m_2 F - \alpha(S+R)),
\end{align}
with
\begin{align}\label{adjoint2}
    \lambda_i(t_f) = 0, \quad 1 \leq i \leq 4.
\end{align}
\end{subequations}
Furthermore, $u^{*}$ can be represented by
\begin{align}\label{optimalitycond}
    u^{*} = \min \Big(u_b, \max \Big( u_a, \frac{r F \lambda_3}{\omega_u} \Big) \Big).
\end{align}
\end{theorem}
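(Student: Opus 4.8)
The plan is to apply Pontryagin's Maximum Principle for a fixed-horizon problem with free terminal state. Existence of an optimal pair $(u^{*},(S^{*},R^{*},E^{*},F^{*}))$ is already furnished by Theorem~\ref{thm6}, and on the bounded invariant region guaranteed by Proposition~\ref{Thm4} the right-hand side of \eqref{Model3} and the integrand of \eqref{ocp} are continuously differentiable in the state and control; this regularity is what licenses the necessary conditions that follow.

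First I would form the Hamiltonian
\begin{align}
H = \omega_S S + \omega_R R + \frac{\omega_u}{2} u^2 + \lambda_1 f_1 + \lambda_2 f_2 + \lambda_3 f_3 + \lambda_4 f_4,
\end{align}
where $f_1,\dots,f_4$ denote the right-hand sides of the state equations \eqref{3a}, \eqref{3b}, \eqref{23c}, \eqref{3d} and $\lambda_1,\dots,\lambda_4$ are the adjoint variables. The adjoint system \eqref{adjoint1} is then read off from the costate equations $\dot\lambda_i = -\partial H/\partial x_i$ with $(x_1,x_2,x_3,x_4)=(S,R,E,F)$, evaluated along the optimal trajectory. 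The transversality condition \eqref{adjoint2}, $\lambda_i(t_f)=0$, is immediate because the terminal state is unconstrained and \eqref{ocp} carries no terminal payoff.

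The substance of the argument is the explicit differentiation of $H$. The derivatives $\partial H/\partial S$ and $\partial H/\partial R$ are routine: differentiating the logistic factors produces the terms $m_1(2S+\eta R)$ and $m_1(S+2\eta R)$, while the fat-consumption term contributes the coupling $\alpha F$, and the death and adaptation terms pass through unchanged. The delicate computation, and the step I expect to be the main obstacle, is $\partial H/\partial E$: one must differentiate the Michaelis--Menten factor, which yields $k_1 a_1/(a_1+E)^2$, together with the two Hill terms, each contributing a factor $l E^{l-1}/(a_j^l+E^l)^2$ for $j=2,3$. Tracking the signs and the Hill-coefficient factors consistently across the $\lambda_1$ and $\lambda_2$ contributions is the error-prone part, and it yields the expression for $d\lambda_3/dt$ in \eqref{adjoint1}.

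Finally I would establish the optimality condition. Since $u$ appears only through $\frac{\omega_u}{2}u^2$ in \eqref{ocp} and through $\lambda_3 (1-u) r F$ in $f_3$, one computes $\partial H/\partial u = \omega_u u - r F \lambda_3$. As $\omega_u>0$, the Hamiltonian is strictly convex in $u$, so its unconstrained minimizer is $u = r F \lambda_3/\omega_u$. Projecting this critical point onto the admissible interval $[u_a,u_b]$ by the standard truncation of bounded controls gives the characterization \eqref{optimalitycond}, which completes the proof.
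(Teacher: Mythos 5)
Your proposal is correct and follows essentially the same route as the paper: both apply Pontryagin's Maximum Principle with the Hamiltonian $\mathcal{H} = \omega_S S + \omega_R R + \tfrac{\omega_u}{2}u^2 + \sum_i \lambda_i f_i$, obtain the adjoint system from $\dot\lambda_i = -\partial\mathcal{H}/\partial x_i$ with transversality $\lambda_i(t_f)=0$, and characterize $u^{*}$ by setting $\partial\mathcal{H}/\partial u = \omega_u u - rF\lambda_3$ and truncating the stationary point to $[u_a,u_b]$. The only cosmetic difference is that the paper encodes the control bounds via a Lagrangian $\mathcal{L} = \mathcal{H} + \xi_1(u-u_a) - \xi_2(u_b-u)$ with nonnegative penalty multipliers, whereas you invoke strict convexity of $\mathcal{H}$ in $u$ and project directly; these yield the same formula \eqref{optimalitycond}.
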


\begin{proof}
Following references \cite{de2008optimal,burden2004optimal}, the Lagrangian is constructed as 
\begin{align}
    \mathcal{L} = \mathcal{H} + \xi_1(t) (u - u_a) - \xi_2(t) (u_b-u),
\end{align}
where the Hamiltonian $\mathcal{H}$ is defined as
\begin{align}
\mathcal{H}(S, R, E, F, & \lambda_1, \lambda_2, \lambda_3, \lambda_4, u)  \nonumber\\
    & :=  ( w_S S +  w_R R + \frac{\omega_u}{2} u^2 ) \nonumber \\
    &+ \lambda_1 \Big( \frac{k_1 E}{a_1 + E} S\Big(1- m_1 ( S+ \eta R ) \Big) -  \frac{ca_2^l}{a_2^l + E^l} S - \frac{ca_3^l}{a_3^l + E^l} S  \Big) \nonumber\\
    &+ \lambda_2 \Big( k_3 R \Big(1- m_1 ( S+ \eta R ) \Big) + \frac{ca_3^l}{a_3^l + E^l} S \Big) \nonumber\\
    &+ \lambda_3 \Big( (1-u) r F - \mu E \Big)
    + \lambda_4 \Big(k_2 F (1-m_2 F) - \alpha (S+R)F \Big),
\end{align}    
and $\xi_i(t) \geq 0$ are penalty multipliers such that
\begin{align}
    \xi_1(t) (u - u_a) =0, \qquad \xi_2(t)(u_b-u) = 0 \hbox{ at } u^{*}.
\end{align}

From the Pontryagin’s Maximum Principle, we can derive the adjoint equations by obtaining partial derivative of the model~\eqref{Model2} with respect to $S, R, E$ and $F$, respectively. Indeed, we get
\begin{align}
    \frac{d \lambda_1}{dt} &= - \frac{\partial \mathcal{L}}{\partial S}, \qquad
    \frac{d \lambda_2}{dt} = - \frac{\partial \mathcal{L}}{\partial R}, \nonumber\\
    \frac{d \lambda_3}{dt} &= - \frac{\partial \mathcal{L}}{\partial E}, \qquad
    \frac{d \lambda_4}{dt} = - \frac{\partial \mathcal{L}}{\partial F},
\end{align}
with $\lambda_i(t_{f}) = 0, i=1, \cdots, 4$.

To obtain an expression of the control, we differentiate the Hamiltonian with respect to $u$ as
\begin{align}
    \frac{\partial \mathcal{H}}{\partial u} = \omega_u u - r F \lambda_3,
\end{align}
and project it onto the admissible set of controls.
\end{proof}


\subsection{Implementation of the optimal control problem}
Third-generation AIs (anastrozole, letrozole and exemestane) reduce whole-body aromatisation by $>$90\% (summarised in ref. \cite{geisler2005aromatase}). However, limited local estrogen production in tissue compartments cannot be totally ruled out. Also,  it is possible that some cells could locally produce some estrogen under treatment \cite{sasano2009situ, geisler2003breast}. Therefore, we assume that the maximum drug dose does not eliminate the total estrogen in the vicinity of tumor. This could be done simply by setting a threshold value on the control function. We use $u_a = 0$ and $u_b = 0.99$, where $u_a$ corresponds to the case of no treatment and $u_b$ refers to the strongest possible treatment.

The optimality system consisting of the state equation \eqref{Model3}, the adjoint equation \eqref{adjoint1} and the optimality condition \eqref{optimalitycond} form a nonlinear system of equations, so we obtain the numerical solution via forward-backward sweep (FBS) method \cite{lenhart2007optimal}. As explained by Lenhart and Workman \cite{lenhart2007optimal}, the FBS method requires initiation of a feasible control function to solve the state equation forward in time. Then, the adjoint equation is solved backward in time and the optimality condition is updated at each iteration until the stopping criterion is satisfied. Here, the idea is to find a feasible optimal control iteratively. The update strategy of the control could be done in different ways such as taking average of the current ($u_{cur}$) and previous control ($u_{pre}$) or their convex combination \cite{lenhart2007optimal}. Here, we apply the approach "greedy" convex combination studied by Vatcheva et al. \cite[Sec.~3]{vatcheva2021social} to cover a large range of control combinations during optimization and avoid stagnation. "greedy" convex combination refers to expressing the control as $u_s = (1-s) u_{pre} + s u_{cur}$ where $s \in (0,1)$ is selected in such a way that the smallest value of $\mathcal{J}(u_s)$ is achieved in that iteration. The parameter $s$ is not fixed as opposed to the averaging or convex combination, it may vary in each iteration instead. The stopping criterion in this paper is based on the relative error of the current and previous state, adjoint and control functions. The program is terminated when a relative error less than $10^{-5}$ is achieved.  

Simulations in this study were performed using MATLAB\textsuperscript{\textregistered}~R2022 \cite{MATLAB:2022}.  We used ode15s solver to obtain the numerical solution of the differential equations and fmincon function in the model calibration step. All data and code are available (see data and code availability part for the details).
\section{Simulation results}\label{sim}
We focus in simulations of the extended model~\eqref{Model2} that explore the effect of threshold values $a_2$ and $a_3$. These two values correspond to the estrogen concentrations below which cancer cells die or become resistant, respectively.  Thus, simulation scenarios using different threshold values represent treatment in hypothetical tumors with differential sensitivities and rates of resistance to the local estrogen availability. For each case, we simulate three different treatment types:  constant treatment, alternating treatment and optimal anti-hormonal treatment. We use the parameter values which are common in both the first and the extended model. For the others, we either fix their values or explore their impact in simulations. We list all parameter values in Table~\ref{Tab2}.

\begin{table}[h!]
\begin{minipage}{10cm}
\caption{Values of the parameters in the Model~\eqref{ocp}-\eqref{Model3}.}\label{Tab2} 
\begin{tiny}
\begin{tabular}{llll}
\toprule
Parameter & Description & Units & Value\\  
\midrule
$k_1$  & Growth rate of sensitive cells  & day$^{-1}$ & 0.55 (calibrated) \\\hline
$\mu$ & Estrogen washout rate & day$^{-1}$ & 5.94   \cite{deshpande1967}\\\hline
$\eta$ & Population competition intensity     & -    &  1 (assumed)  \\ \hline
$m_1$ & Inverse carrying capacity of tumor  & mm$^{-3}$ & 1/2000   (assumed)\\\hline
	$c$ & Death rate &   day$^{-1}$    & 1 (assumed)       \\ \hline
	$l$  & Hill's coefficient    &  -   &  10 (assumed) \\\hline
	$a_1$ & Half maximum estrogen threshold  & pg/g    & 43 (calibrated)\\ \hline
		$a_2$ & Estrogen threshold for sensitive cells to die &  pg/g     & varies        \\\hline
		$a_3$ & Estrogen threshold for conversion to resistant &  pg/g     & varies  \\\hline
		$k_3$  & Growth rate of resistant cells   & day$^{-1}$ & varies \\\hline
		$p$  & Effect of treatment   &  -   &  varies  \\\hline
		$k_2$ & Fat growth rate   & day$^{-1}$ & 0.05 (assumed) \\\hline
		$m_2$ & Inverse carrying capacity of fat   &   mm$^{-3}$  & 0.002711\footnotemark[1]   \\\hline
		$r$ & Estrogen production rate   &pg/g mm$^{-3}$ day$^{-1}$ & 20 (assumed) \\\hline
		$\alpha$ & Fat consumption rate & day$^{-1}$ mm$^{-3}$ & 1.7e-06 (calibrated) \\ \hline
		$t_f$  & Final time   &  days  &  25  \\\hline
		$u_a$  & Minimum treatment   &  -  &  0  \\\hline
		$u_b$  & Maximum treatment  &  -  &  0.99  \\\hline
		$\omega_S, \omega_R, \omega_u $ & Positive weight coefficients   &  -  &  1\footnotemark[2]  \\
\botrule
\end{tabular}
\footnotetext[1]{See appendix for computational details.}
\footnotetext[2]{unless otherwise stated.}
\end{tiny}
\end{minipage}
\end{table}

For constant treatment, the parameter $p$ is chosen from the set $\{ 1, 0.025, 0.0125, 0.01, 0.001 \}$. For alternating treatment, we set $u_b=0.99$. Treatment is started on the date corresponding to the earliest time point $t:=t_{tr}$ at which $S + \eta R < \frac{1}{4 m_1}$ so that the tumor reaches a detectable size. The final simulation time is fixed as $t_f=25$ to obtain a unique optimal control (see \cite[Sec.~4]{fister1998optimizing}, for detailed discussion). 

We choose the weight coefficients $\omega_S, \omega_R, \omega_u$ in the cost functional as one or hundred to model different penalization strategies. For instance, values $\omega_S, \omega_R > \omega_u$ refers to penalization of tumor cells more than treatment cost.

\subsection{Scenario I with \texorpdfstring{ $a_2=20$~pg/g, $a_3=1$~pg/g, $k_3 = \frac{k_1}{2}$} .}
\subsubsection{Scenario Ia: Adaptive resistance with \texorpdfstring{$R_0=0$}.}
We first explore a scenario without preexisting resistance, where initially all cells are assumed to be sensitive to treatment and no resistant cells exist. Instead, endocrine resistance may arise due to adaptation to low estrogen levels. Tumor cells may die if estrogen level is below $a_2=20$ pg/g and they may become resistant if estrogen concentration falls below $a_3=1$ pg/g.  Proliferation rate of the sensitive cells is assumed to be equal to half of the growth rate of sensitive cells.

In Fig.~\ref{Fig_E1}, we show response to constant treatment by displaying the change in tumor size for different values of $p$. The solid line corresponds to the case where no treatment is applied, i.e., $p=1$ and tumor reaches to the carrying capacity as time passes. We mark in the figures the time point at which treatment is started with a dashed vertical line and we observe that treatment is started earlier for HFD than CD. We observe that anti-hormonal treatment results in eradication of tumor for the values $p=0.0125$ and 0.01 for both CD and HFD, whereas the case $p=0.025$ does not lead to tumor eradication for HFD. In case of a drug inhibiting estrogen production 99.9\%, i.e., $p=0.001$, drug resistance is observed.

\newpage
\begin{figure}[h!]
	\centering
	$
	\begin{array}{cc}
	\includegraphics[height=0.28\textwidth]{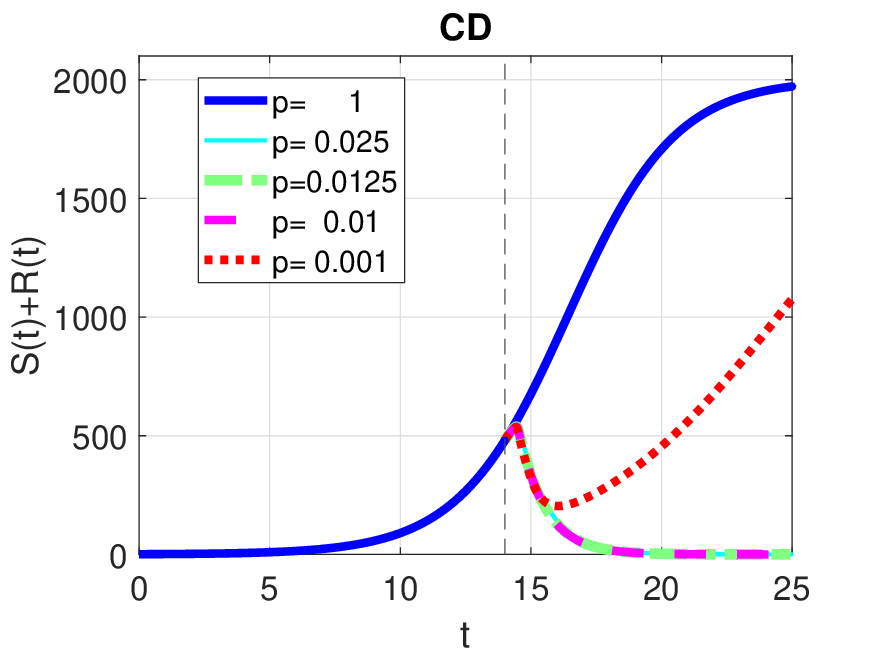}&
	\includegraphics[height=0.28\textwidth]{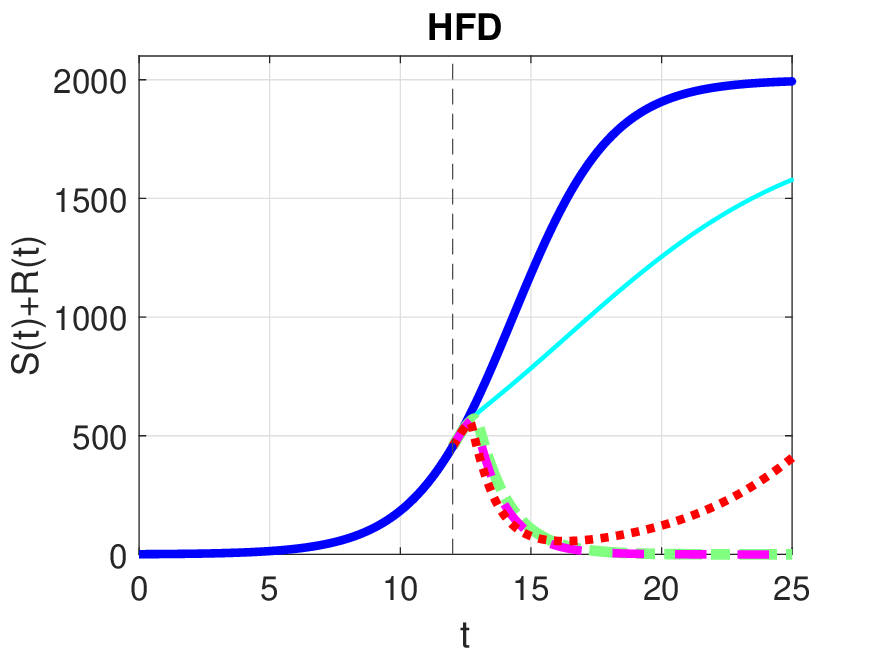}
	\end{array}
	$
	\caption{Scenario Ia: Sum of the sensitive $S$ and resistant $R$ tumor subpopulations over time $t$ for the constant treatment with different values of $p$ associated with CD (\textit{left}) and HFD (\textit{right}). We mark the time point at which treatment is started with a dashed vertical line. }\label{Fig_E1}
\end{figure}
\vspace{-1.5cm}
\begin{figure}[h!]
\centering
\subfloat[]{\includegraphics[scale=0.32]{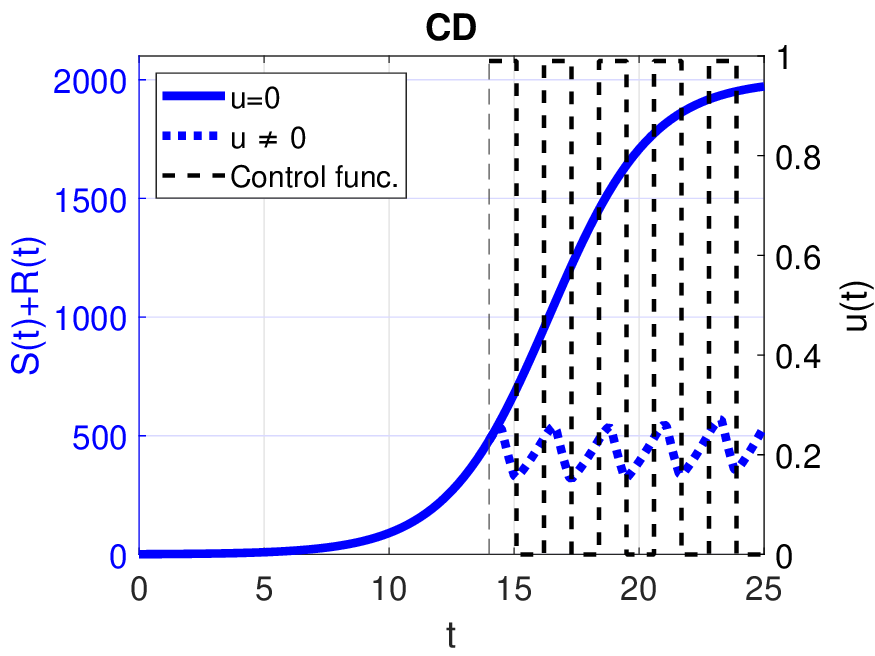}\label{Fig_I1a}}
\subfloat[]{\includegraphics[scale=0.32]{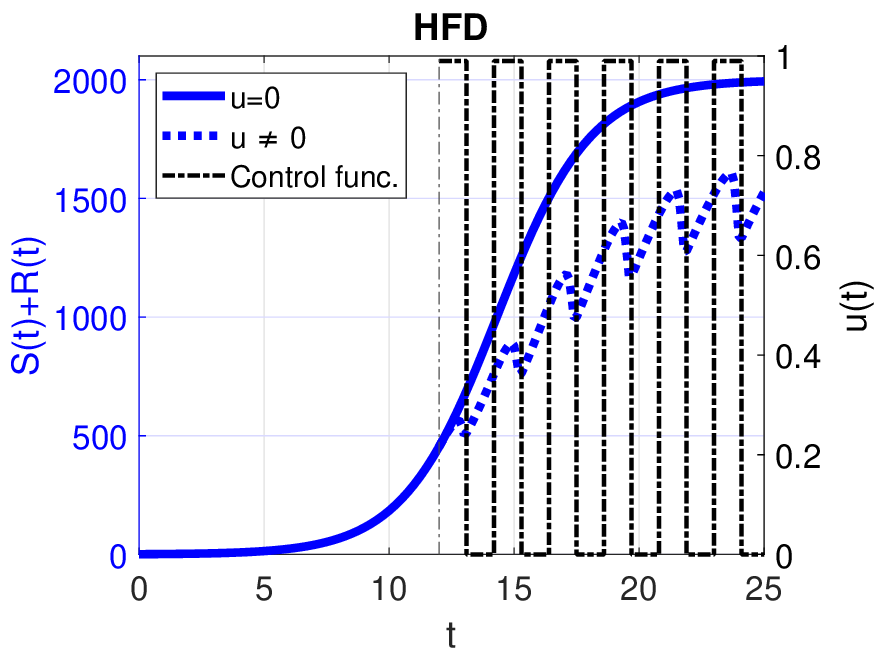}\label{Fig_I1b}}\\
\subfloat[]{\includegraphics[scale=0.32]{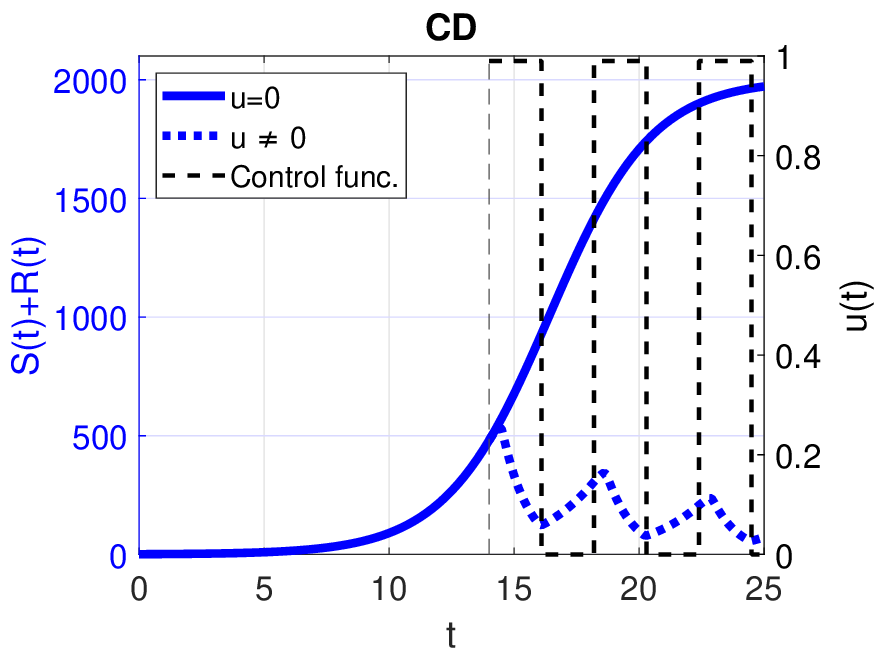}\label{Fig_I1c}}
\subfloat[]{\includegraphics[scale=0.32]{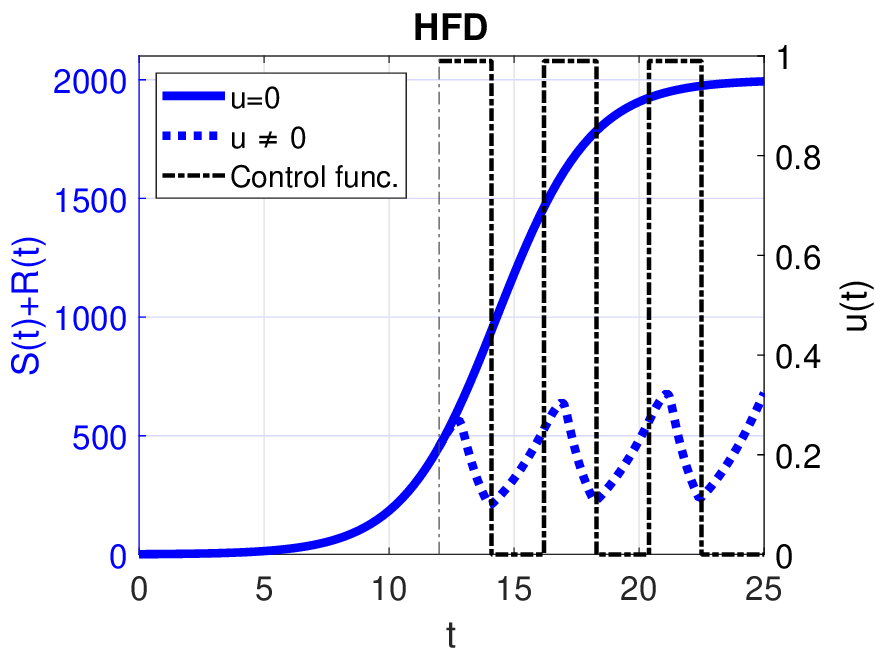}\label{Fig_I1d}}
\caption{Scenario Ia: Left axis refers to the sum of the sensitive $S$ and resistant $R$ tumor subpopulations over time $t$ for alternating treatment with \protect\subref{Fig_I1a}-\protect\subref{Fig_I1b} shorter phases (one day), \protect\subref{Fig_I1c}-\protect\subref{Fig_I1d} longer phases (two days) associated with CD (\textit{left}) and HFD (\textit{right}); right axis refers to treatment schedule. The dotted and solid curves refer to the tumor size with and without treatment, respectively. We mark the time point at which treatment is started with a dashed vertical line.}\label{Fig_I1}
\end{figure}

Next, we proceed with alternating treatment. We simulate two different schedules with long and short treatment phases in Fig~\ref{Fig_I1}. The solid curve refers to the tumor size without treatment. For CD, alternating treatment with shorter phases causes tumor volume to stay within a range. Instead for HFD, tumor size grows over time with respect to the baseline tumor volume. On the other hand, treatment with longer phases leads to tumor reduction for CD, while it stays within a range for HFD.
In case of optimal treatment scheduling, we observe in Fig.~\ref{Fig_T1} that the tumors are eradicated for both CD and HFD  (solid line for no treatment, dashed line for optimal treatment with $\omega_R= \omega_S = \omega_U = 1$). For comparison, optimal control functions $u(t)$ are shown in Fig.~\ref{Fig_U1} for different values of the weight constants. The case $\omega_R, \omega_S > \omega_U$ leads in both CD and HFD to maximum treatment for almost the entire studied period, whereas treatment could be stopped earlier if $\omega_R, \omega_S < \omega_U$. In other words, penalizing tumor cells more than treatment results in longer treatment. There is no big difference between control functions in terms of diet, except for a slightly larger duration of treatment for HFD. To observe the effect of treatment in more detail, we present in Fig.~\ref{Fig_S1_Ia} the dynamics of all variables in the case $\omega_R= \omega_S = \omega_U = 1$. In this example, optimal treatment maintains the estrogen level between $a_3$ and $a_2$, so sensitive cells die, but no resistance occurs. This happens in spite of an increasing fat volume. Therefore, optimal treatment results in successful elimination of tumor without causing drug resistance. 

\newpage
\begin{figure}[h!]
	\centering
	$
	\begin{array}{cc}
	\includegraphics[height=0.28\textwidth]{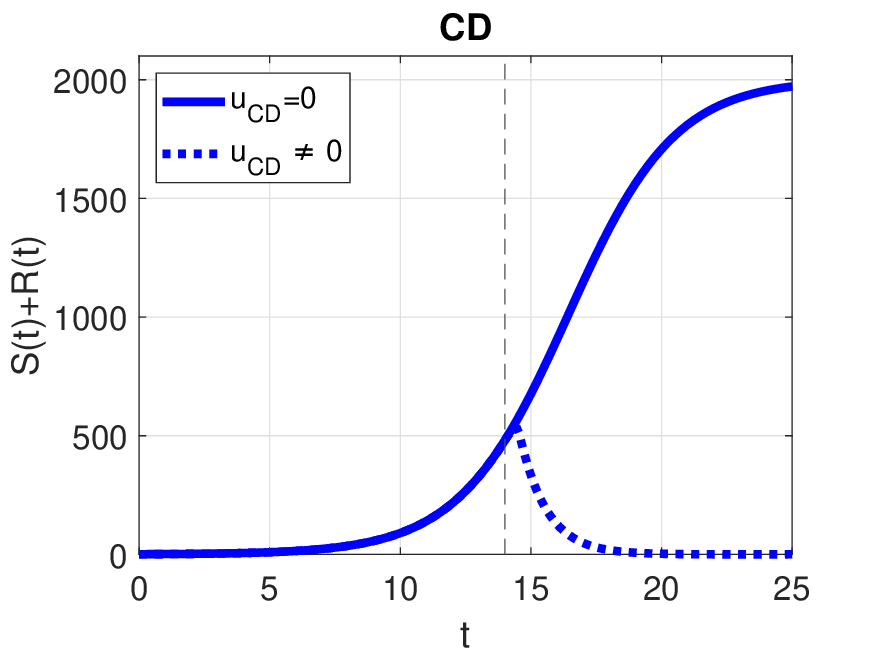}&
	\includegraphics[height=0.28\textwidth]{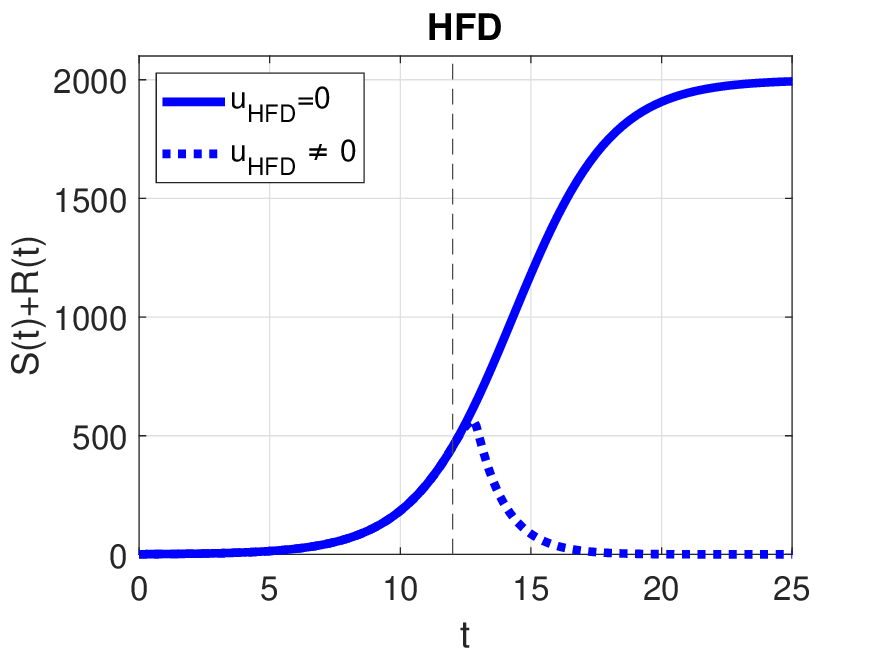}
	\end{array}
	$
	\caption{Scenario Ia: Sum of the sensitive $S$ and resistant $R$ tumor subpopulations over time $t$ for the optimal treatment with different values of $p$ associated with CD (\textit{left}) and HFD (\textit{right}). The solid curve refers to the tumor size without treatment. We mark the time point at which treatment is started with a dashed vertical line.}\label{Fig_T1}
\end{figure}
\vspace{-1.5cm}
\begin{figure}[h!]
	\centering
	$
	\begin{array}{ccc}
	\includegraphics[width=0.3\textwidth]{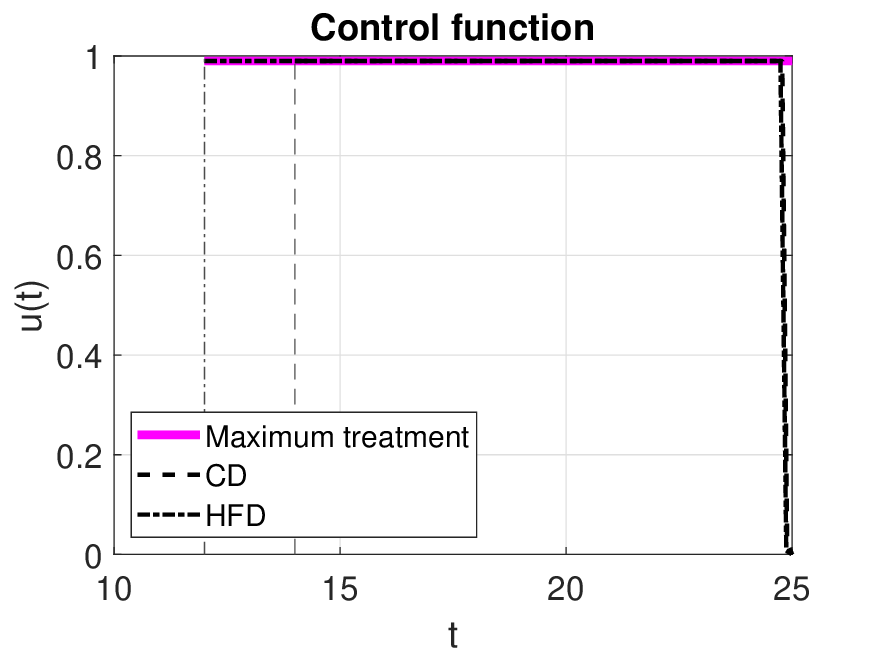}&
	\includegraphics[width=0.3\textwidth]{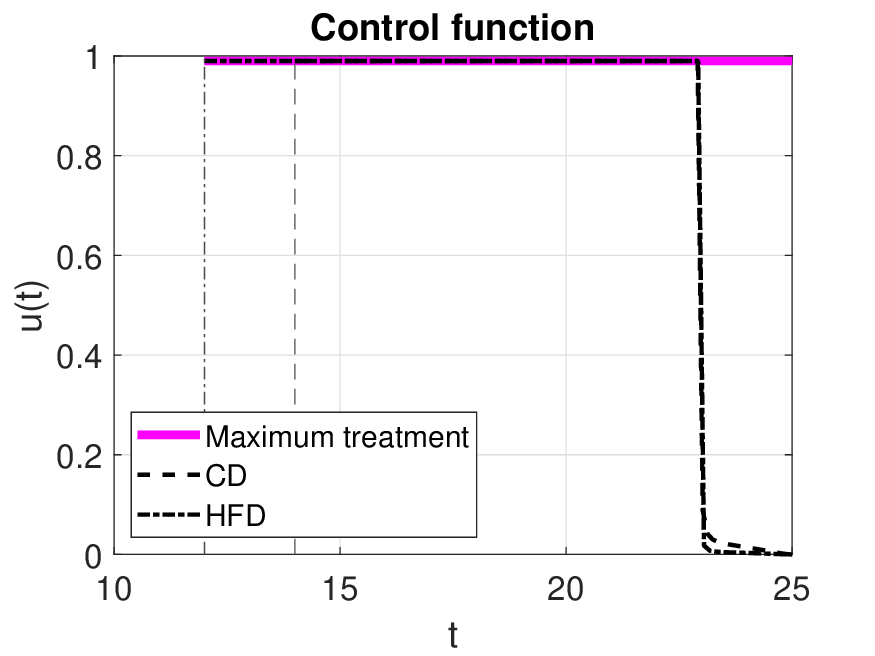}&
	\includegraphics[width=0.3\textwidth]{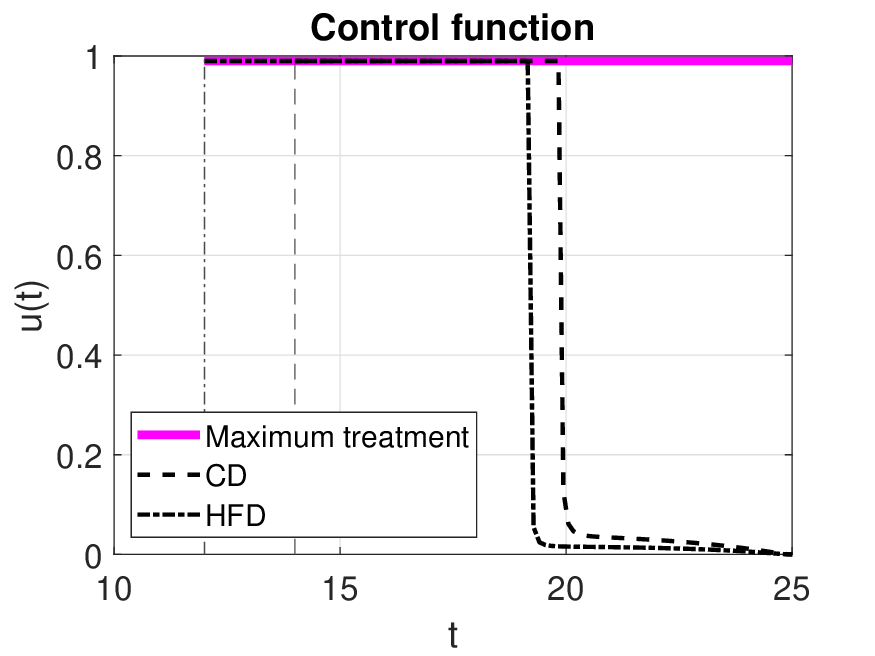}\\
		 {\scriptsize \textrm{(\textit{a})~$\omega_R=\omega_S=100, \omega_U=1$.}}& {\scriptsize \textrm{(\textit{b})~$\omega_R=\omega_S=\omega_U=1$.}} &{\scriptsize \textrm{(\textit{c})~$\omega_R=\omega_S=1, \omega_U=100$.}}		
	\end{array}
	$
	\caption{Scenario Ia: Optimal control function $u$ over time $t$ for three different combinations of weight coefficients $\omega_R, \omega_S, \omega_U$. Dashed and dash-dotted curves refer to the optimal treatment schedules for CD and HFD, respectively. Solid line denotes the maximum treatment. We mark the time point at which treatment is started with dashed and dash-dotted vertical lines for CD and HFD, respectively.}\label{Fig_U1}
\end{figure}
\begin{figure}
	\centering
	$
	\begin{array}{c}
	\includegraphics[height=0.54\textwidth]{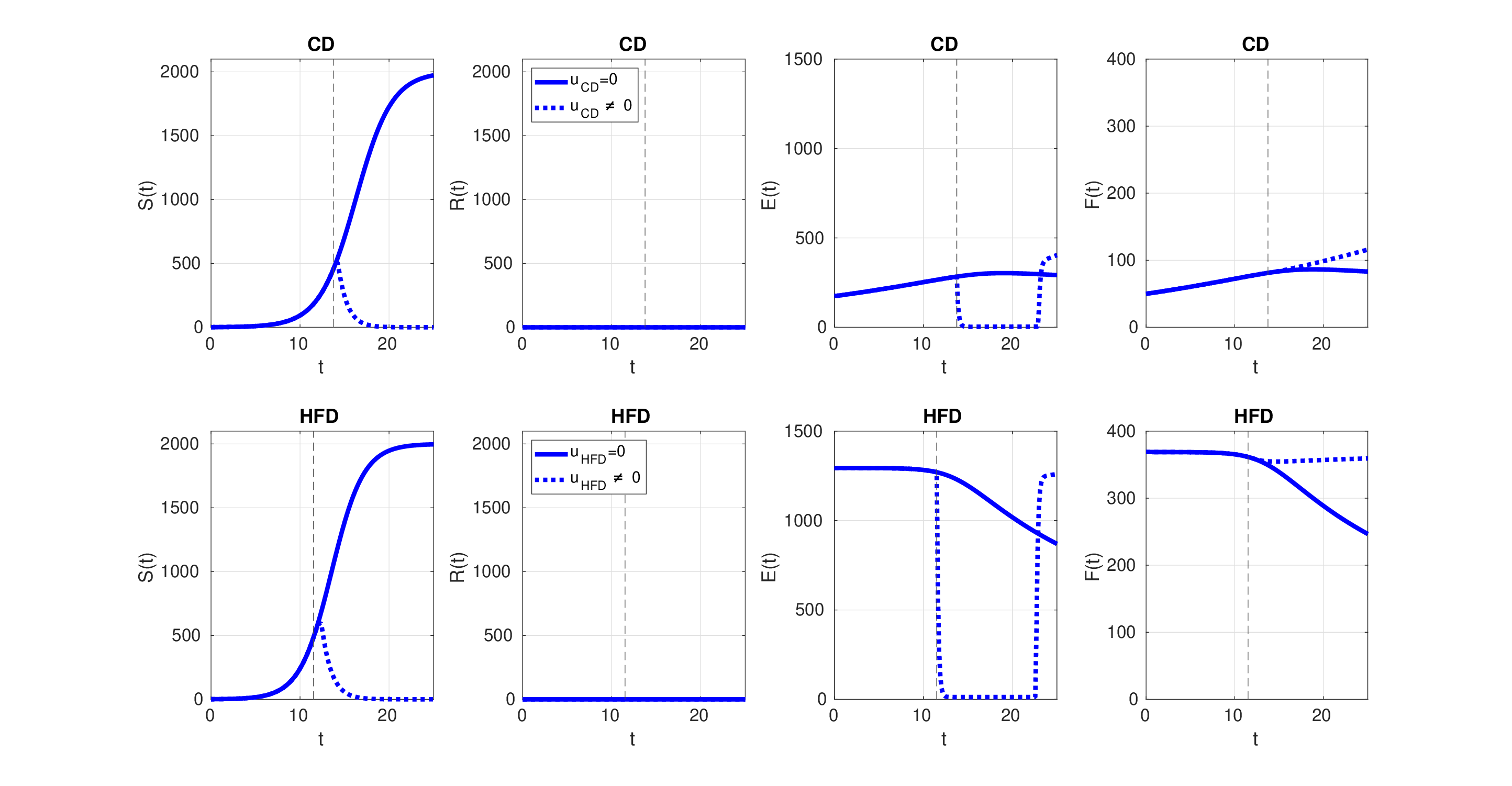}
	\end{array}
	$
	\caption{Scenario Ia: Dynamics of model variables $S$, $R$, $E$ and $F$ over time $t$ associated with CD (\textit{1st row}) and HFD (\textit{2nd row}). The solid curve refers to the tumor size without treatment, dotted curve corresponds to results for optimal treatment.}\label{Fig_S1_Ia}
\end{figure}
\clearpage

\subsubsection{Scenario Ib: De novo resistance with \texorpdfstring{$R_0=0.25$}.}
Next we investigate the influence of a preexisting resistant sub-population on the success of constant, alternating and optimal anti-hormonal treatment schedules. In this case, endocrine resistance arise by clonal selection of cells that are endocrine independent for some reasons. 
In Fig.~\ref{Fig_E1b} we show response to constant treatment by displaying the change in tumor size for different values of $p$. We observe that constant treatment is unsuccessful to eliminate the tumor.

We plot the change of tumor size over time for short and long alternating treatment phases in Fig.~\ref{Fig_S1_pre} where 75$\%$ of the cells are sensitive and 25$\%$ are resistant at the beginning of the simulation. For shorter drug holidays, tumor size increases compared to the initial tumor size. For longer on-off periods, the tumor volume remains within a bounded range for both CD and HFD but the oscillations between remission and growth are bigger in the HFD case. Moreover, the final tumor volume is larger for both cases in comparison with the case of no preexisting resistance showed in Fig.~\ref{Fig_I1}.

\clearpage
\begin{figure}[h!]
	\centering
	$
	\begin{array}{cc}
	\includegraphics[height=0.28\textwidth]{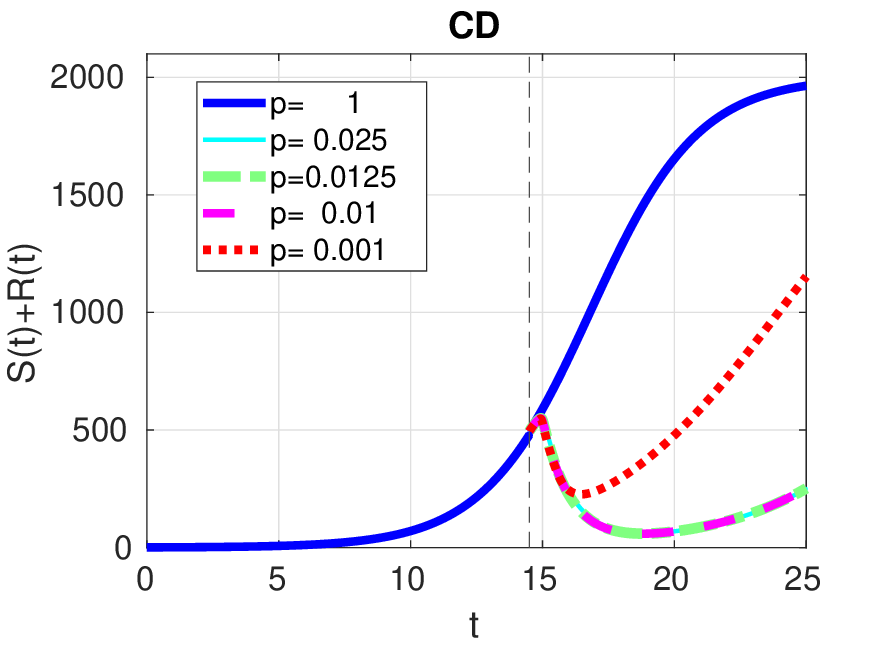}&
	\includegraphics[height=0.28\textwidth]{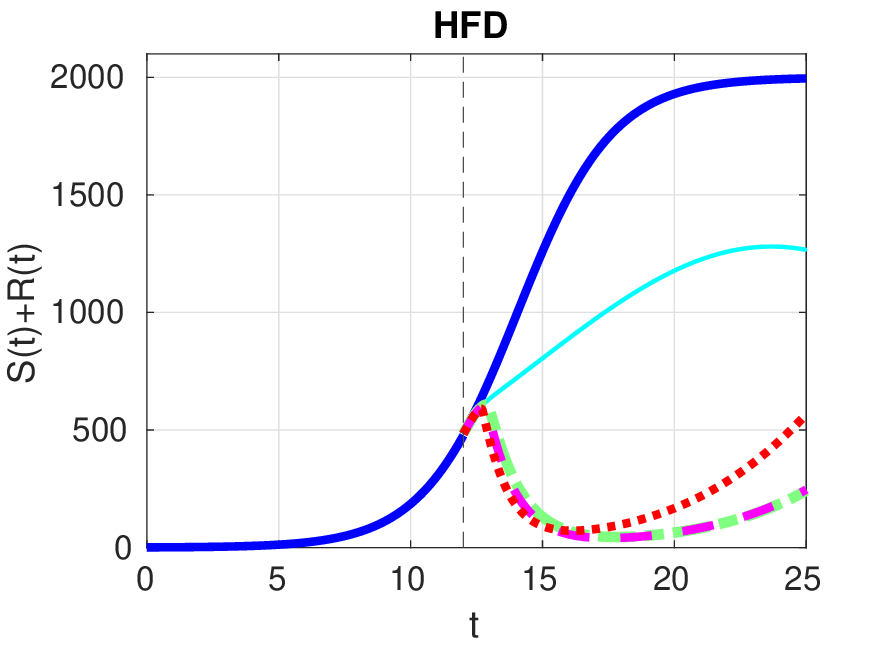}
	\end{array}
	$
	\caption{Scenario Ib: Sum of the sensitive $S$ and resistant $R$ tumor subpopulations over time $t$ for the constant treatment with different values of $p$ associated with CD (\textit{left}) and HFD (\textit{right}). We mark the time point at which treatment is started with a dashed vertical line.}\label{Fig_E1b}
\end{figure}
\vspace{-1cm}
\begin{figure}[h!]
\centering
\subfloat[][]{\includegraphics[scale=0.32]{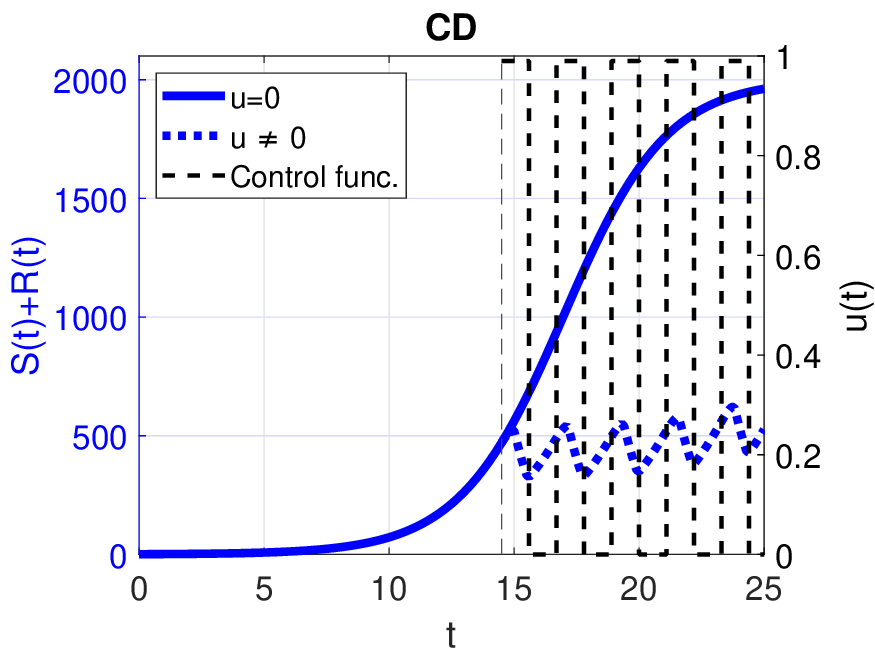}\label{Fig_I1ab}}
\subfloat[][]{\includegraphics[scale=0.32]{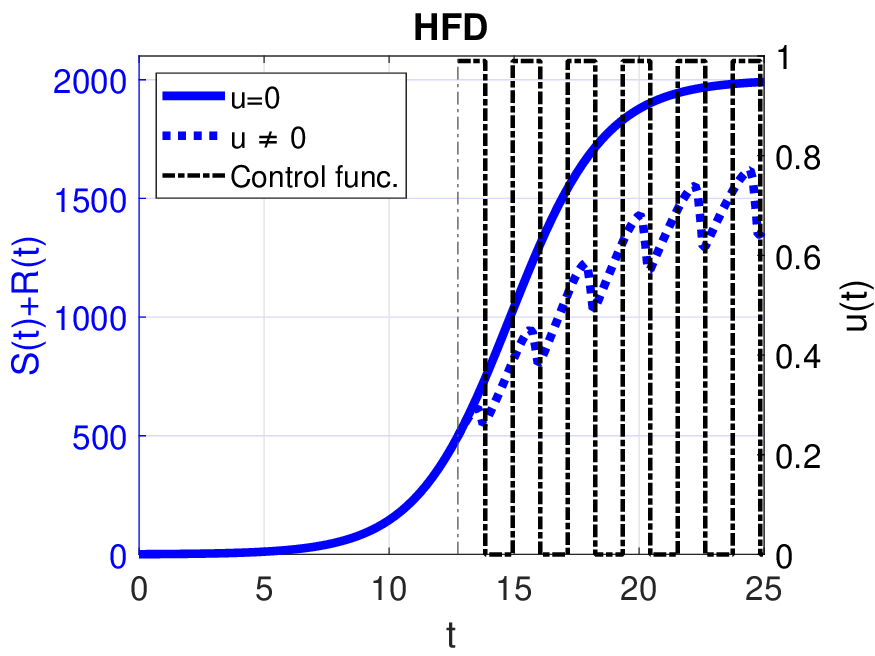}\label{Fig_I1bb}}\\
\subfloat[][]{\includegraphics[scale=0.32]{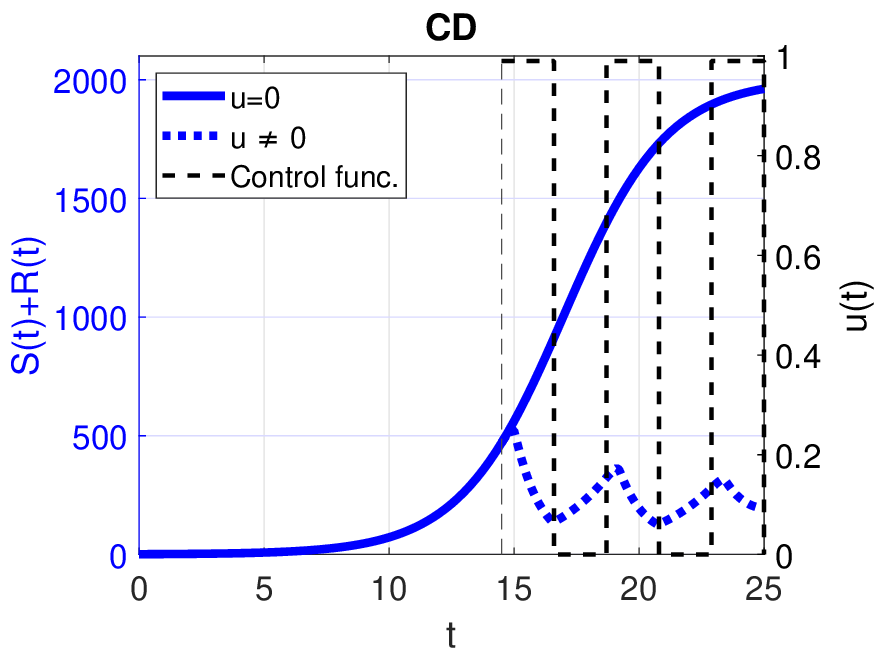}\label{Fig_I1cb}}
\subfloat[][]{\includegraphics[scale=0.32]{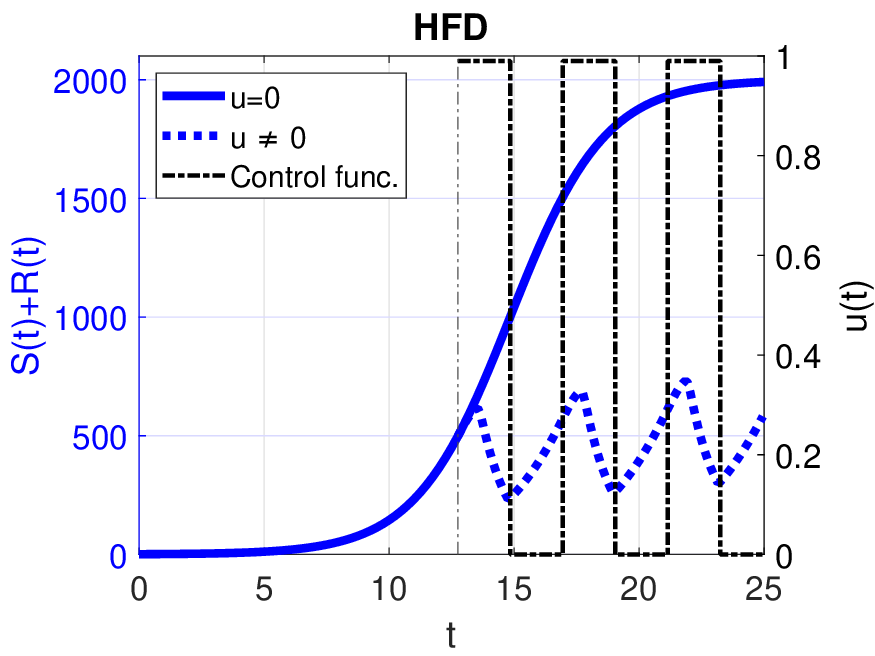}\label{Fig_I1db}}
\caption{Scenario Ib: Left axis refers to the sum of the sensitive $S$ and resistant $R$ tumor subpopulations over time $t$ for alternating treatment with \protect\subref{Fig_I1ab}-\protect\subref{Fig_I1bb} shorter phases (one day), \protect\subref{Fig_I1cb}-\protect\subref{Fig_I1db} longer phases (two days) associated with CD (\textit{left}) and HFD (\textit{right}); right axis refers to treatment schedule. The dotted and solid curves refer to the tumor size with and without treatment, respectively. We mark the time point at which treatment is started with a dashed vertical line.}\label{Fig_S1_pre}
\end{figure}

We plot the results obtained with optimal treatment in Fig.~\ref{Fig_T1b}. While treatment decreases the tumor volume in both CD and HFD cases, resistance cells proliferate and drug resistance occurs. We present the temporal evolution of the model variables in detail in Fig.~\ref{Fig_S1b}. We can see that sensitive cells are killed but resistant cells increase in size as a result of drug-resistance. Optimal control profiles are similar to the case of adaptive resistance in Fig.~\ref{Fig_U1}, so we do not present it here.

\begin{figure}[h!]
	\centering
	$
	\begin{array}{cc}
	\includegraphics[height=0.28\textwidth]{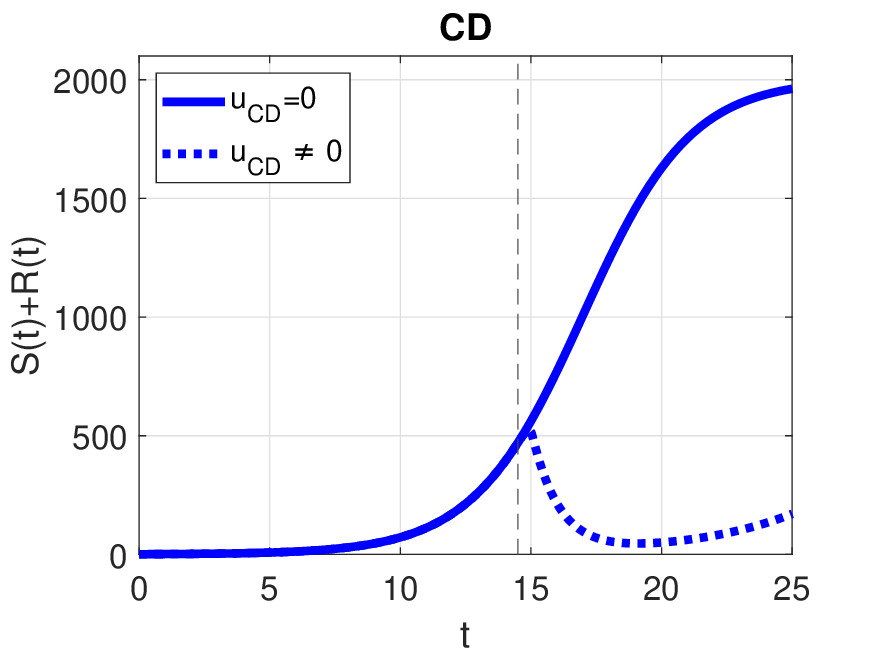}&
	\includegraphics[height=0.28\textwidth]{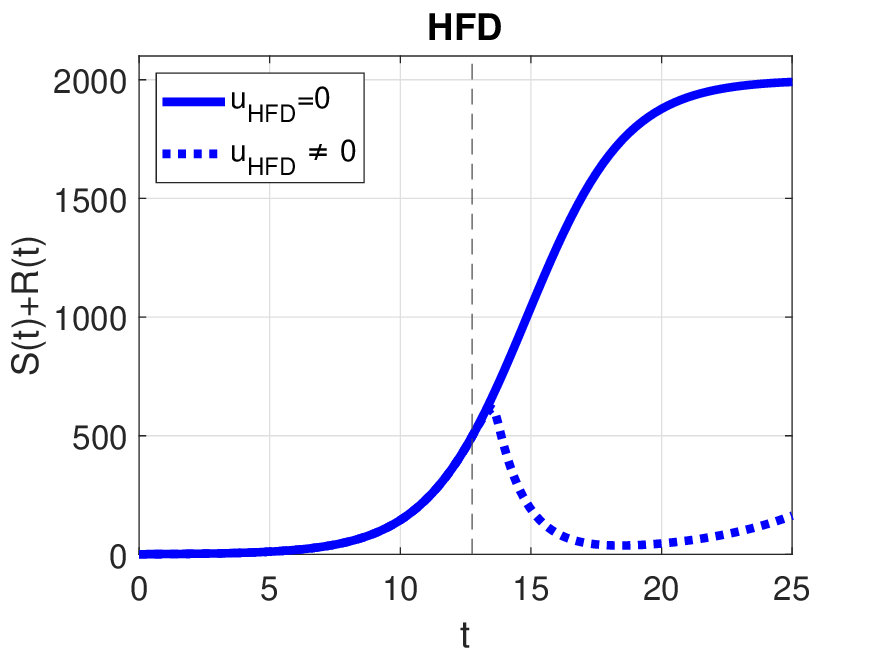}
	\end{array}
	$
	\caption{Scenario Ib: Sum of the sensitive $S$ and resistant $R$ tumor subpopulations over time $t$ for the optimal treatment with different values of $p$ associated with CD (\textit{left}) and HFD (\textit{right}). The solid curve refers to the tumor size without treatment. We mark the time point at which treatment is started with a dashed vertical line.}\label{Fig_T1b}
\end{figure}
\begin{figure}[h!]
	\centering
	$
	\begin{array}{c}
	\includegraphics[height=0.53\textwidth]{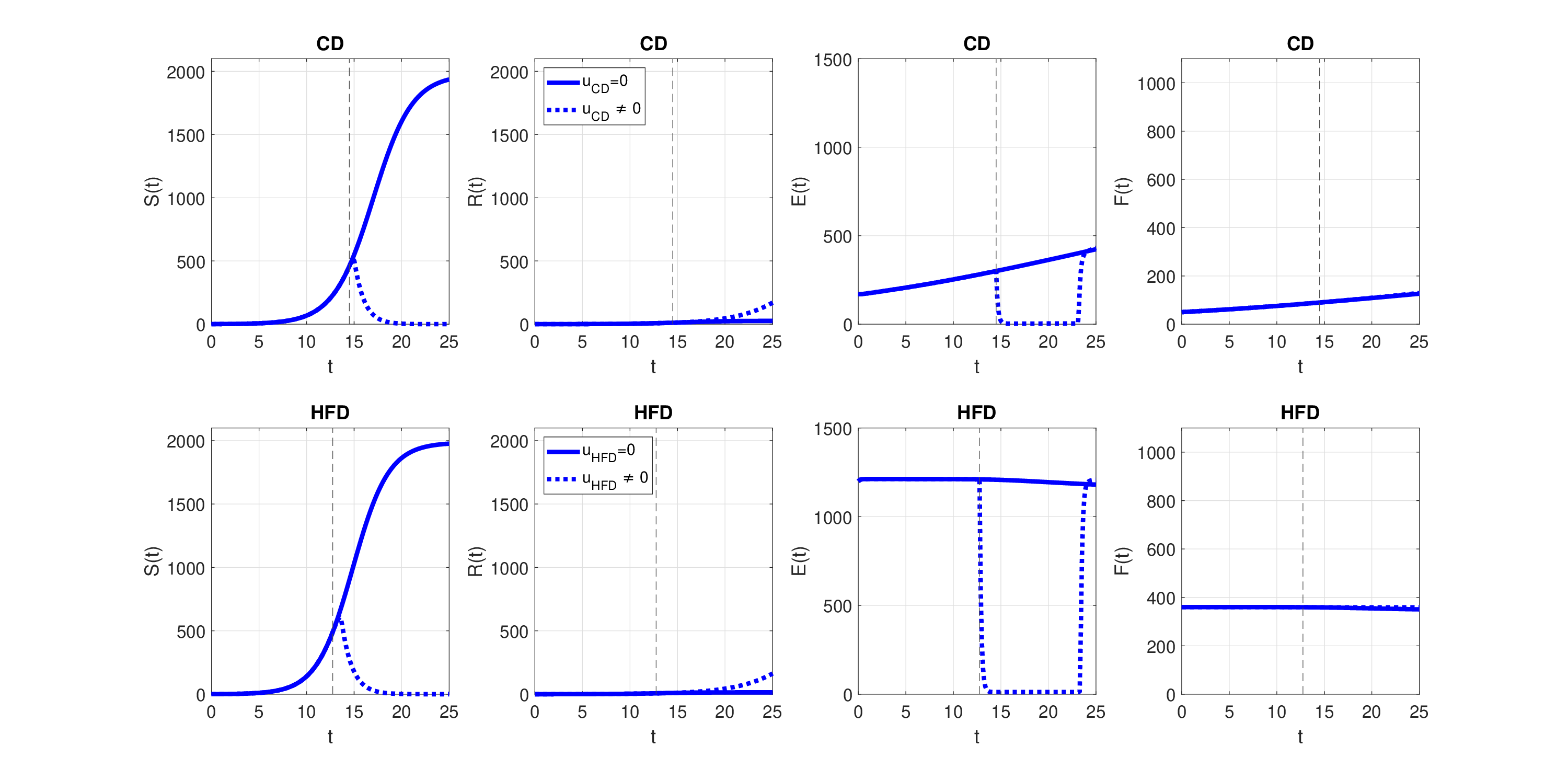}
	\end{array}
	$
	\caption{Scenario Ib: Dynamics of model variables $S$, $R$, $E$ and $F$ over time $t$ associated with CD (\textit{1st row}) and HFD (\textit{2nd row}). The solid curve refers to the tumor size without treatment, dotted curve corresponds to results for optimal treatment.}\label{Fig_S1b}
\end{figure}

\subsection{Scenario II: Adaptive resistance with \texorpdfstring{$a_2=10$~pg/g, $a_3=1$~pg/g, $k_3 =\frac{k_1}{2}$}.}
Here we investigated a situation with no preexisting resistant cells but where the estrogen thresholds for which the cells die or are converted to resistant are closer than in the previous case. Fig.~\ref{Fig_E2} shows the case for constant treatment. All tested treatment cases, except $p=0.001$, result in tumor elimination for CD, but $p=0.01$ leads to tumor reduction until day 25 and higher values of $p$ suppresses tumor growth for HFD.
Alternating treatment instead, leads to oscillations in tumor size but with a decreasing trend for CD, whereas for HFD a sharp increase in tumor population is observed during drug holidays (see Fig.~\ref{Fig_I2}). 

The results for optimal treatment shown in Fig.~\ref{Fig_T2} reveal that tumor is eradicated for CD, similar to Fig.~\ref{Fig_T1}. Interestingly, the tumor remains at the end of the treatment for HFD.
\newpage
\vspace{-1cm}
\begin{figure}[h!]
	\centering
	$
	\begin{array}{cc}
	\includegraphics[height=0.28\textwidth]{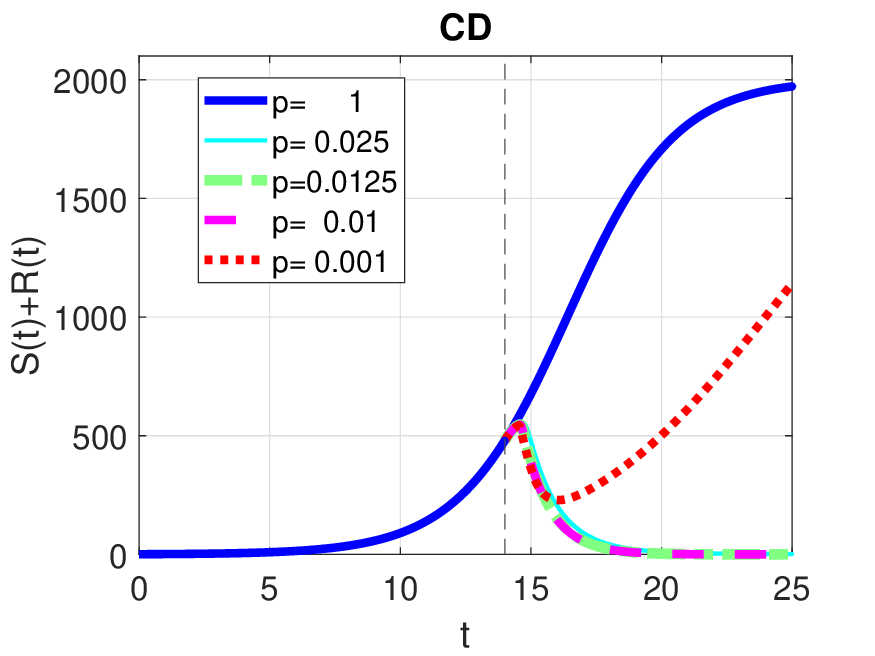}&
	\includegraphics[height=0.28\textwidth]{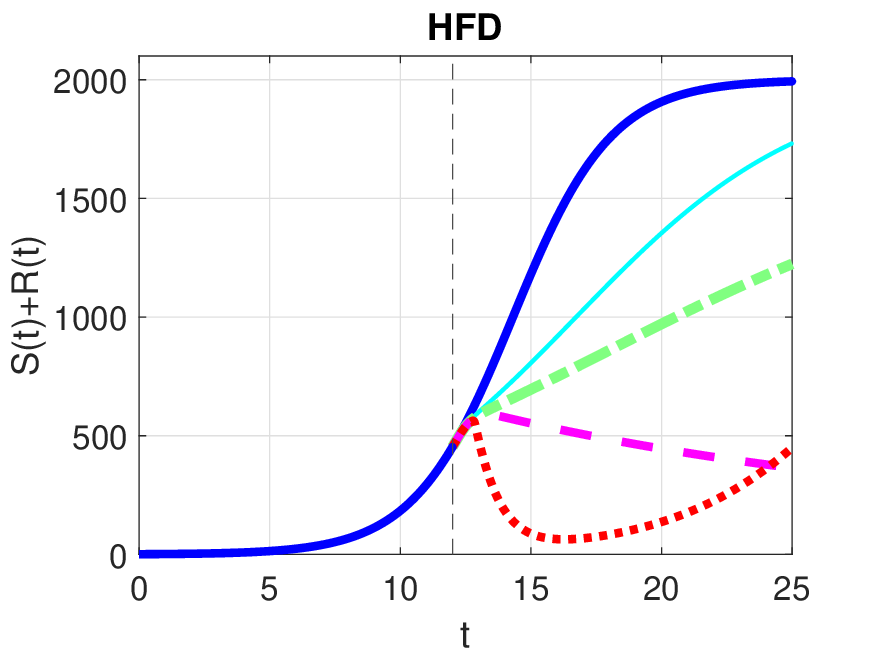}
	\end{array}
	$
	\caption{Scenario II: Sum of the sensitive $S$ and resistant $R$ tumor subpopulations over time $t$ for the constant treatment with different values of $p$ associated with CD (\textit{left}) and HFD (\textit{right}). We mark the time point at which treatment is started with a dashed vertical line.} \label{Fig_E2}
\end{figure}
\vspace{-1.5cm}
\begin{figure}[h!]
	\centering
	$
	\begin{array}{cc}
	\includegraphics[height=0.28\textwidth]{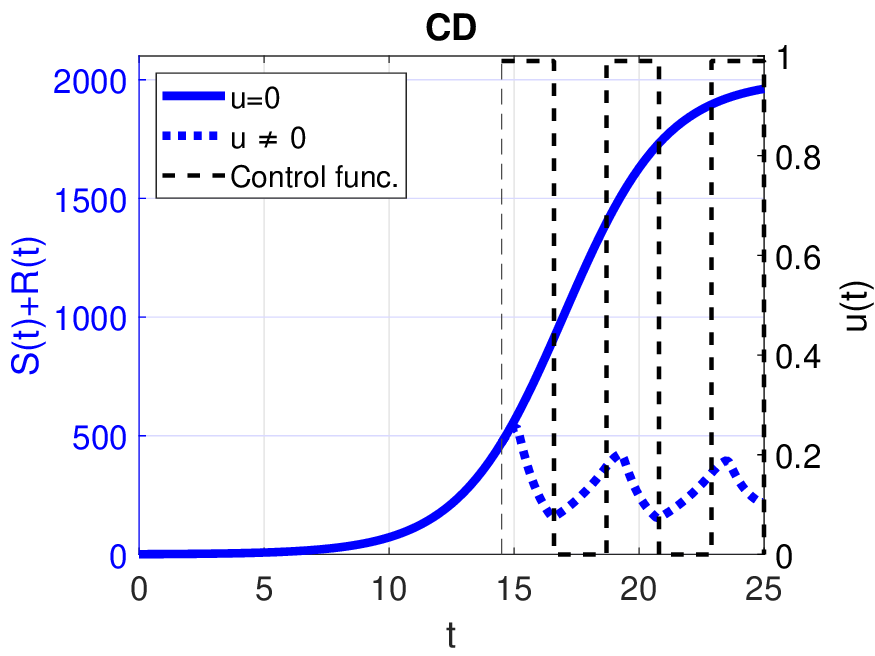} \label{Fig_I22a}&
	\includegraphics[height=0.28\textwidth]{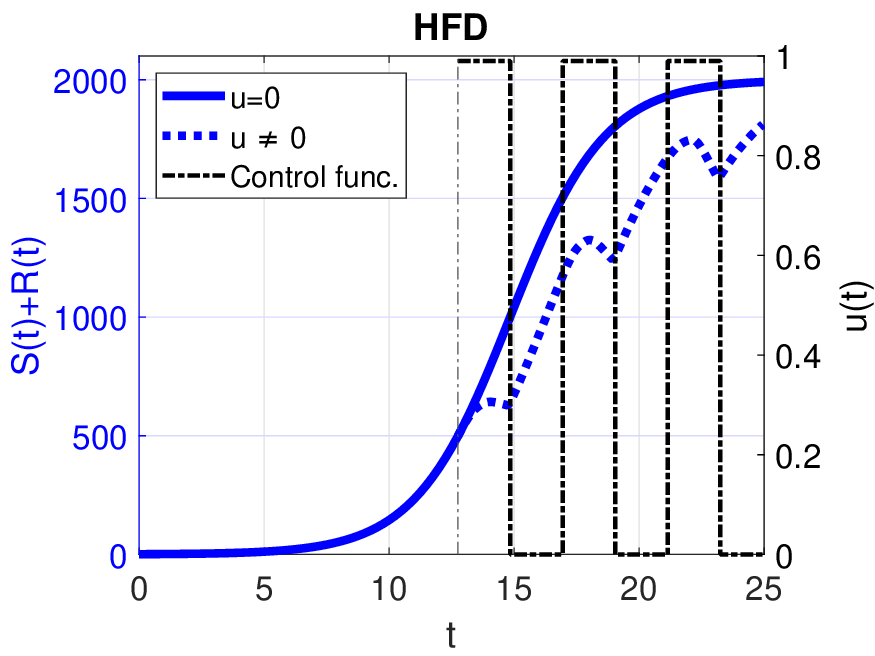} \label{Fig_I22b}
	\end{array}
	$
	\caption{Scenario II: Left axis refers to the sum of the sensitive $S$ and resistant $R$ tumor subpopulations over time $t$ for alternating treatment associated with CD (\textit{left}) and HFD (\textit{right}); right axis refers to treatment schedule. The dotte and solid curves refer to the tumor size with and without treatment, respectively. We mark the time point at which treatment is started with a dashed vertical line.}\label{Fig_I2}
\end{figure}

We compare optimal treatment schedules in Fig.~\ref{Fig_C2} for CD and HFD. We note that treatment must be applied for long time for HFD than CD, while it could be relaxed earlier for CD. Thus, optimal anti-hormonal treatment gives the most promising results among three different treatment choices in terms of reduction in tumor volume and time to lessen treatment could also be seen.
\begin{figure}[h!]
	\centering
	$
	\begin{array}{cc}
	\includegraphics[height=0.28\textwidth]{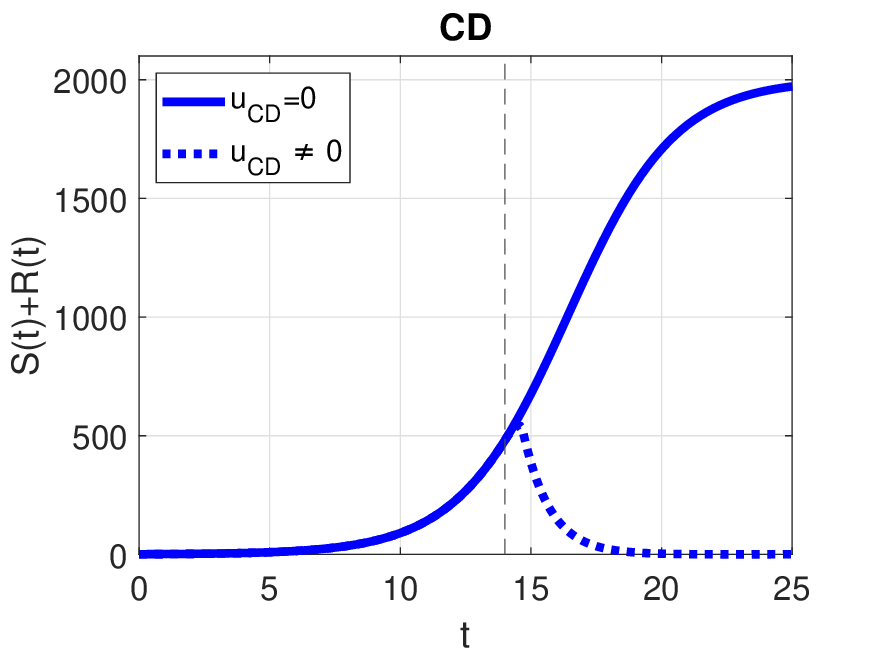}&
	\includegraphics[height=0.28\textwidth]{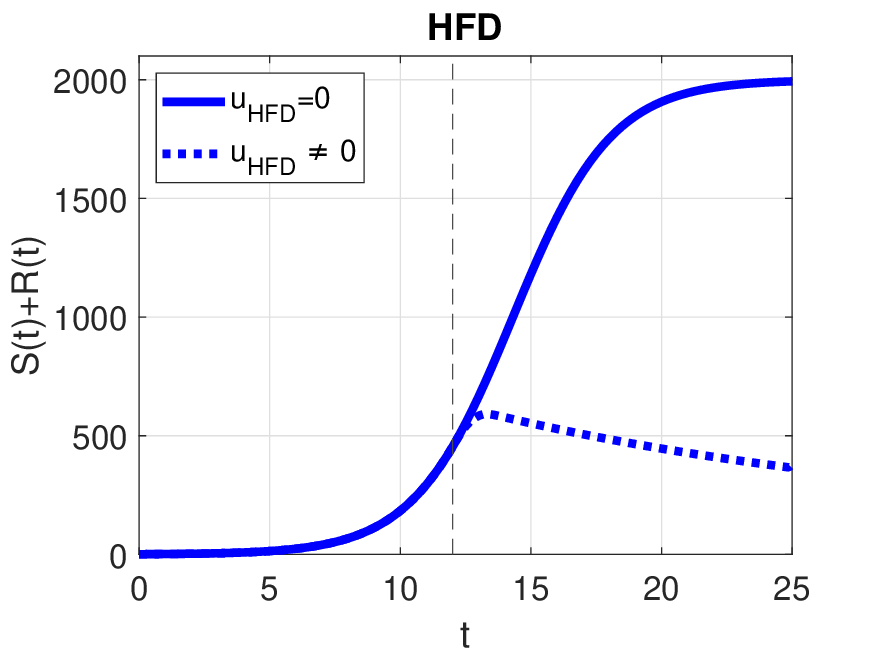}
	\end{array}
	$
	\caption{Scenario II: Sum of the sensitive $S$ and resistant $R$ tumor subpopulations over time $t$ for the optimal treatment with different values of $p$ associated with CD (\textit{left}) and HFD (\textit{right}). The solid curve refers to the tumor size without treatment. We mark the time point at which treatment is started with a dashed vertical line.}\label{Fig_T2}
\end{figure}
\begin{figure}[h!]
	\centering
	$
	\begin{array}{c}
	\includegraphics[height=0.28\textwidth]{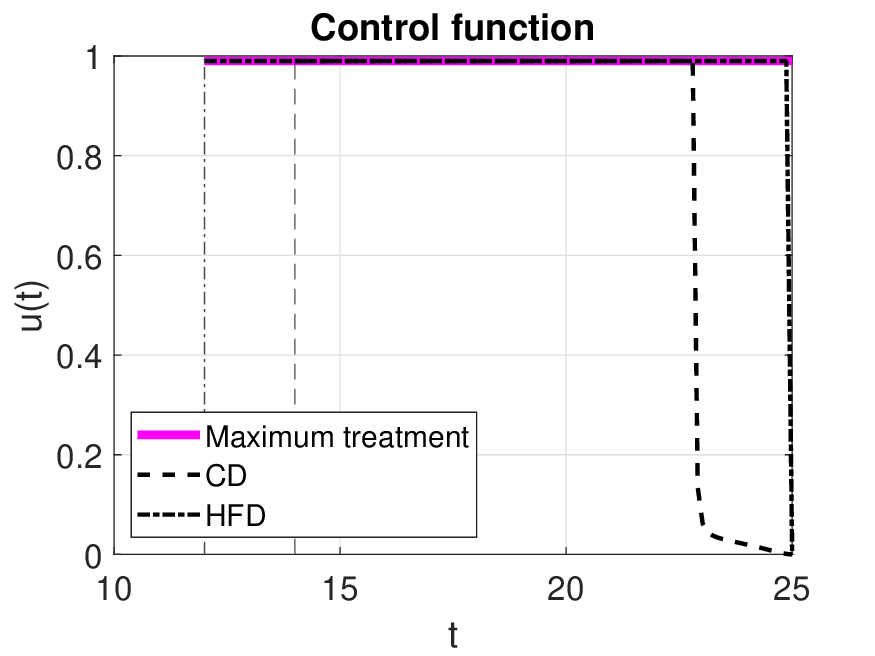}
	\end{array}
	$
	\caption{Scenario II: Optimal control function $u$ over time $t$ with $\omega_R=\omega_S=\omega_U=1$. Dashed and dash-dotted curves refer to the optimal treatment schedules for CD and HFD, respectively. Solid line denotes the maximum treatment. We mark the time point at which treatment is started with dashed and dash-dotted vertical lines for CD and HFD, respectively.}\label{Fig_C2}
\end{figure}

\subsection{Scenario III: Adaptive resistance with \texorpdfstring{$a_2=a_3=10$ pg/g, $k_3 = \frac{k_1}{4}$}.}
Finally, we investigate a scenario where death and conversion terms are equivalent, namely $a_2 = a_3 = 10$. We present temporal evolution of all model variables for constant treatment in Fig.~\ref{Fig_E4}. Estrogen level is successfully decreased, but it leads to drug resistance for CD for all choices of the parameter $p$. On the other hand, for HFD, the case $p=0.025$ is not strong enough to kill sensitive cells, so resistance cells do not proliferate. However, other treatment choices result in resistance and treatment fails.
\begin{figure}[h!]
	\centering
	$
	\begin{array}{c}
	\includegraphics[height=0.53\textwidth]{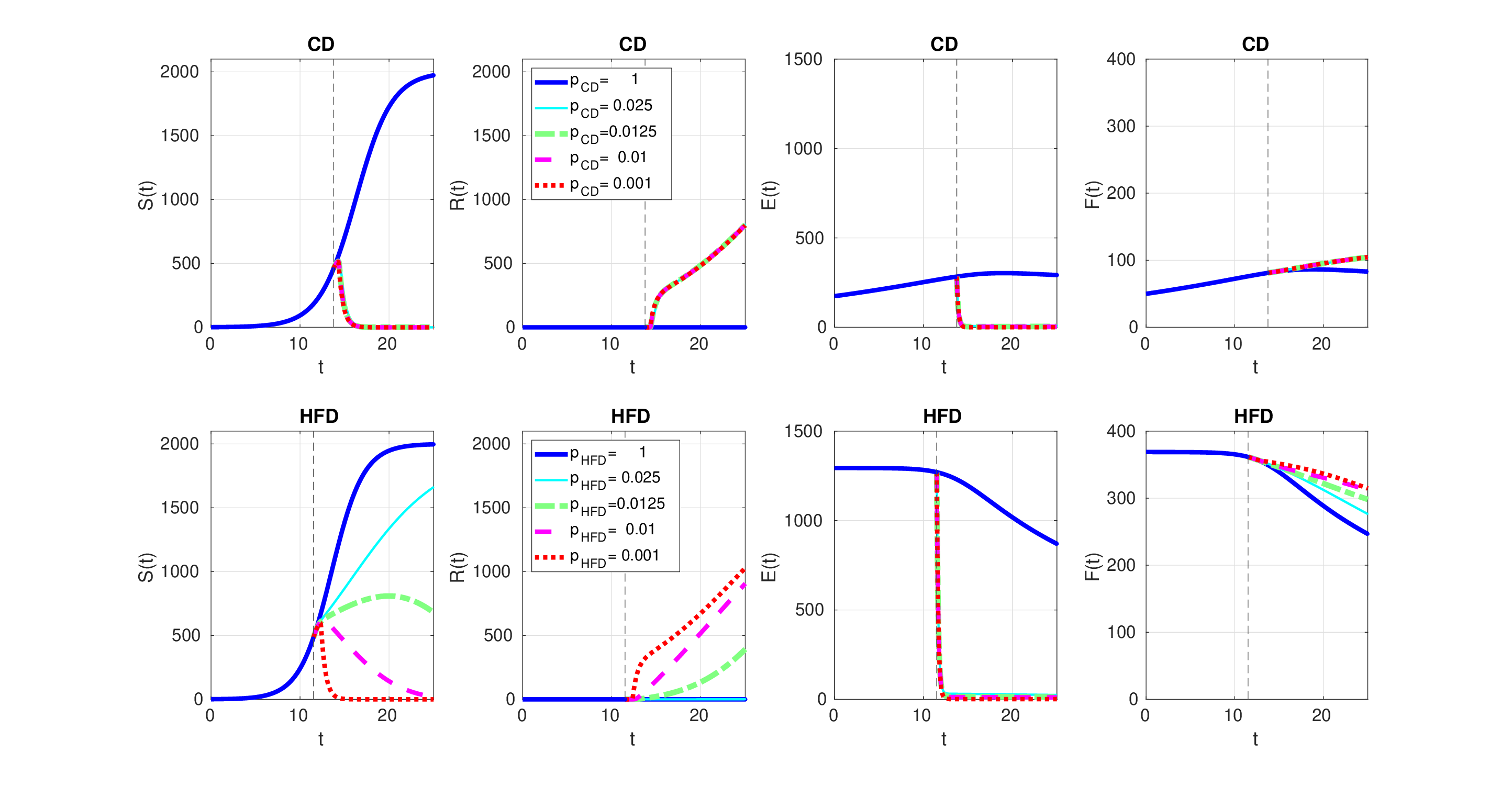}
	\end{array}
	$
	\caption{Scenario III: Dynamics of model variables $S$, $R$, $E$ and $F$ over time $t$ associated with CD (\textit{1st row}) and HFD (\textit{2nd row}). }\label{Fig_E4}
\end{figure}
On the other hand, alternating treatment is not a successful strategy (see Fig.~\ref{Fig_I3}). 
\vspace{-1.5cm}
\begin{figure}[h!]
	\centering
	$
	\begin{array}{cc}
	\includegraphics[height=0.28\textwidth]{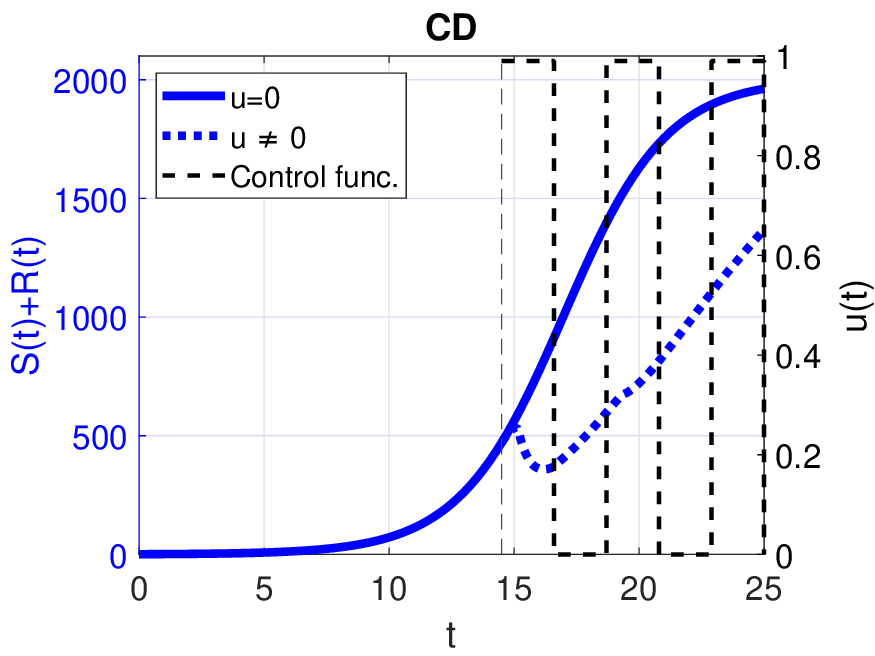}&
	\includegraphics[height=0.28\textwidth]{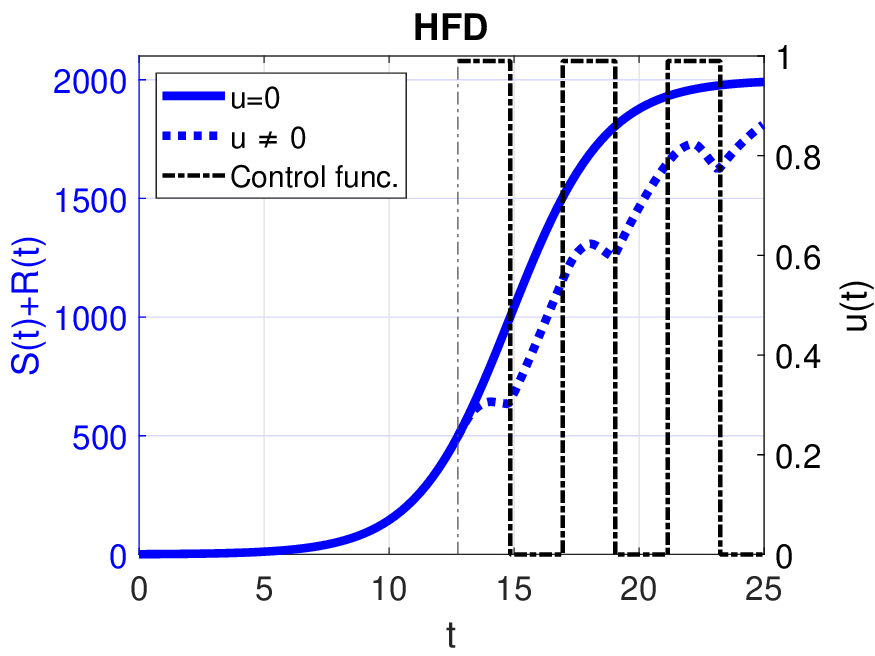}
	\end{array}
	$
	\caption{Scenario III: Left axis refers to the sum of the sensitive $S$ and resistant $R$ tumor subpopulations over time $t$ for alternating treatment associated with CD (\textit{left}) and HFD (\textit{right}); right axis refers to treatment schedule. The solid line refers to the tumor size without treatment. We mark the time point at which treatment is started with a dashed vertical line.}\label{Fig_I3}
\end{figure}
%

Finally, optimal AI treatment results in drug resistance as seen in Fig.~\ref{Fig_T3}. Initial tumor size reduction is followed by cell proliferation. Even though treatment is stopped earlier for CD than HFD (see Fig.~\ref{Fig_C1b}), it is not possible to eliminate resistance due to equal cell death and conversion terms in the model.
\begin{figure}[h!]
	\centering
	$
	\begin{array}{cc}
	\includegraphics[height=0.28\textwidth]{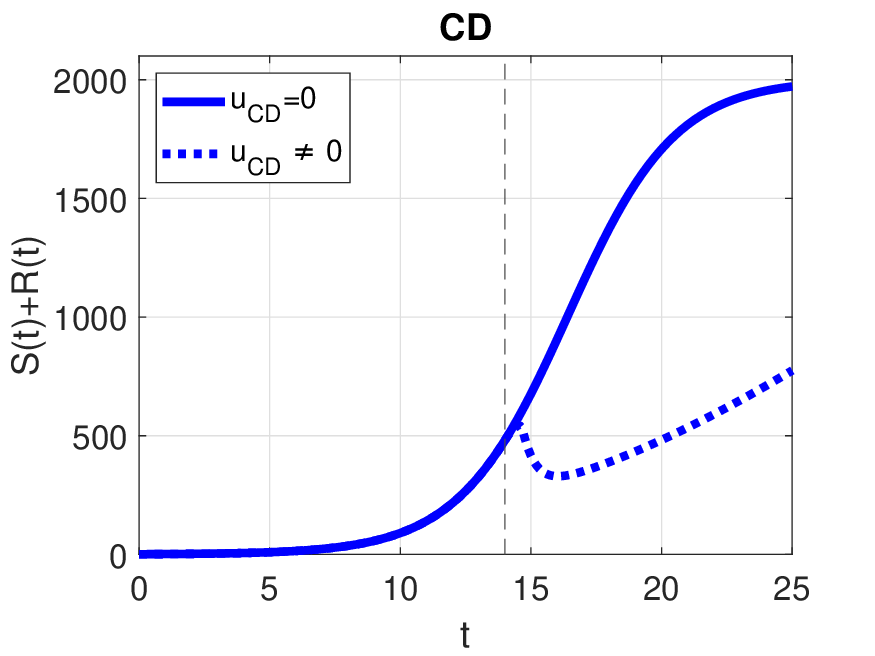}&
	\includegraphics[height=0.28\textwidth]{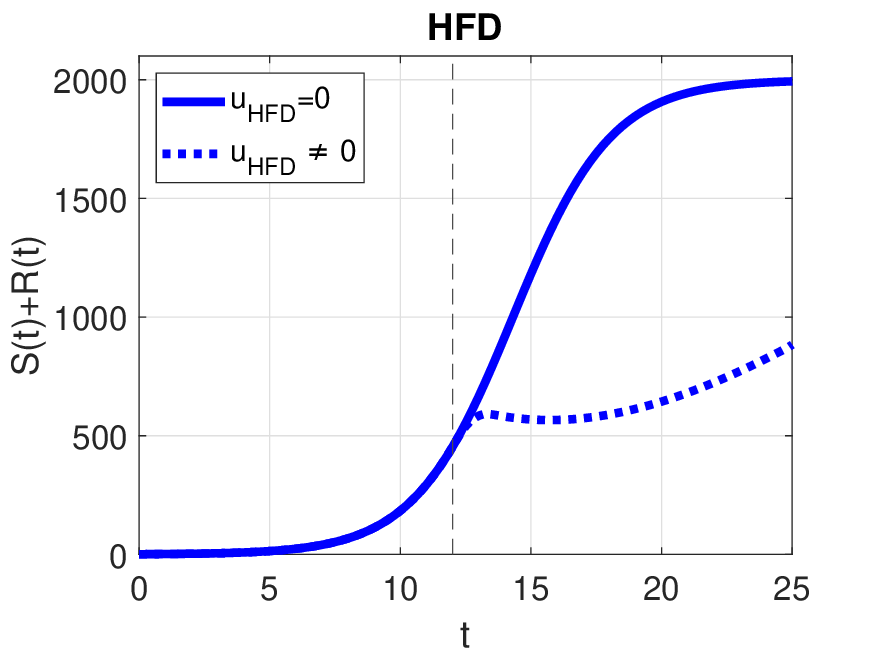}
	\end{array}
	$
	\caption{Scenario III: Sum of the sensitive $S$ and resistant $R$ tumor subpopulations over time $t$ for the optimal treatment with different values of $p$ associated with CD (\textit{left}) and HFD (\textit{right}). The solid curve refers to the tumor size without treatment. We mark the time point at which treatment is started with a dashed vertical line.}\label{Fig_T3}
\end{figure}

\begin{figure}[h!]
	\centering
	$
	\begin{array}{c}
	\includegraphics[height=0.28\textwidth]{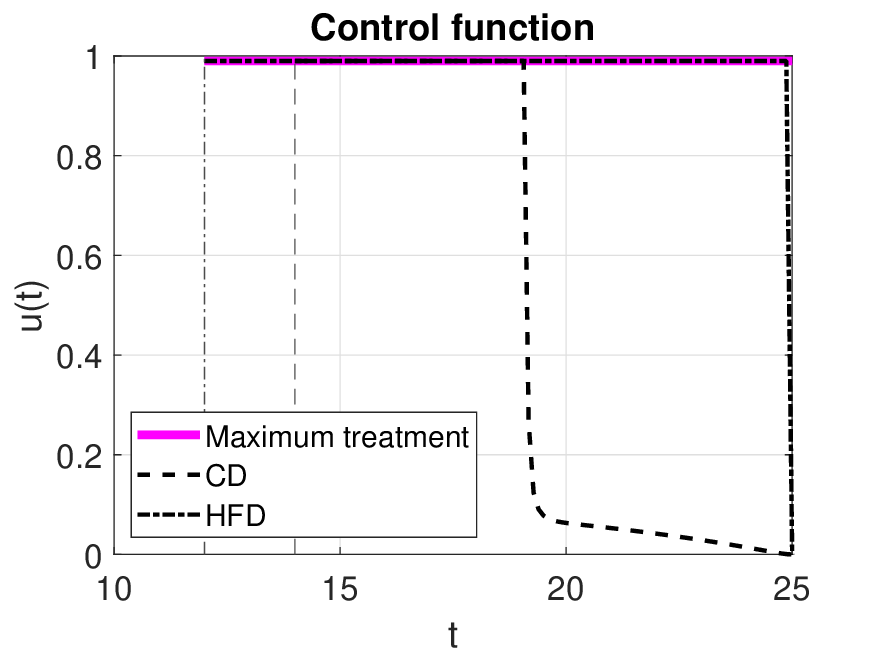}
	\end{array}
	$
	\caption{Scenario III: Optimal control function $u$ over time $t$ with $\omega_R=\omega_S=\omega_U=1$. Dashed and dash-dotted curves refer to the optimal treatment schedules for CD and HFD, respectively. Solid line denotes the maximum treatment. We mark the time point at which treatment is started with dashed and dash-dotted vertical lines for CD and HFD, respectively.}\label{Fig_C1b}
\end{figure}

A detailed picture of model variables is presented in Fig.~\ref{Fig_E3} and it reveals that treatment kills sensitive cells due to low estrogen level; but, then resistance occurs. 
\begin{figure}[h!]
	\centering
	$
	\begin{array}{c}
	\includegraphics[height=0.53\textwidth]{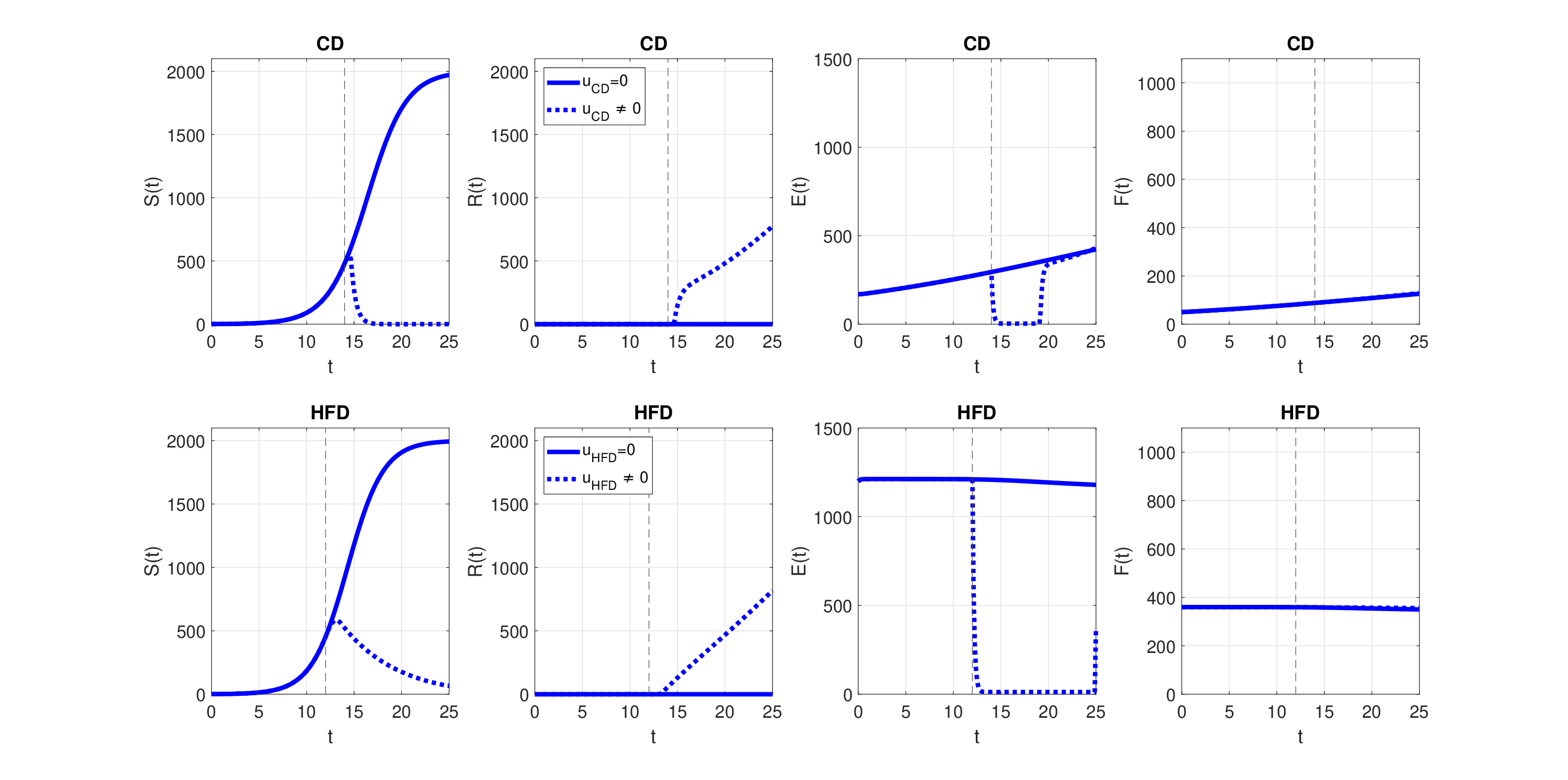}
	\end{array}
	$
	\caption{Scenario III: Dynamics of model variables $S$, $R$, $E$ and $F$ over time $t$ associated with CD (\textit{1st row}) and HFD (\textit{2nd row}). The solid curve refers to the tumor size without treatment, dotted curve corresponds to results for optimal treatment.}\label{Fig_E3}
\end{figure}

\subsection{Conclusions of the simulation results}
We compared outcomes for different treatments in a series of hypothetical tumors with differential sensitivities and rates of resistance to the local estrogen availability. We observed that in tumors where the difference between estrogen thresholds for cancer cells to die and to adapt to low estrogen levels is large, then constant treatment with an appropriate dose or optimal treatment are the best for the case of only adaptive resistance. However, if the difference between the thresholds is smaller, then optimal treatments are better, specially in the HFD case. In case of preexisting resistance, thresholds for cancer cells to die and to adapt to low estrogen levels is large, optimal treatment or constant treatment with appropriate dose gives the best outcome. When death of cancer cells and their adaptation to level of estrogen occur at the same threshold value, optimal treatment is best choice. Importantly, treatment outcome and optimal treatments schedules differ based on diet.

\section{Discussion}\label{conc}

Given the rising obesity rates around the world, novel strategies are urgently needed to evaluate and optimise endocrine treatment of breast cancer in women with high BMI. In this study we focused on modeling the effect that fat-induced production of estrogen has on tumor growth. While our model is able to capture the trends in the experimental data for CD and HFD mice, we recognise that other factors associated with the adipose tissue and not considered in our current model, such as inflammatory cytokines, leptin or insulin, could be influencing tumour growth differently in the CD and HFD cases. These are subjects that deserve further investigation \cite{hillers2022breast}.

By incorporating AI treatment and resistance in our model, we can simulate treatment outcomes in CD and HFD mice. However, as we do not have data on treatment, the choice of parameters related to sensitivity and resistant to treatment were made by explorative simulations. For instance, we assumed cost of resistance in the sense that the growth rate of resistance cells is smaller than the growth rate of sensitive cells. Otherwise, rapidly increasing resistant cells would always dominate the tumor. In addition to this, more than two tumor subpopulations with differential drug-response to AI could exists.  When AI treatment data in these mice are available, it would be possible to obtain the number of subpopulations, their fractions and their growth rates trough a novel phenotypic deconvolution method \cite{Kohn-Luque2022}.

Besides constant and alternating treatments, we investigated optimal scheduling trough OCPs. In this framework, we underline that one of the theoretical challenges is to prove uniqueness of the optimal control on a specific time interval $[0, t_f]$, since the value of $t_f$ cannot be found explicitly, and it is bounded by some constants depending on the solutions of the state and adjoint equation. We observe that a larger time interval leads to convergence issues, which is an indication of the uniqueness of the solution on a smaller time interval. We have also experienced that the more complicated the ODE model used in the OCP constraint is, the smaller the time interval where a unique solution can be found. Furthermore, uniqueness could be proved using constant tumor growth rate, but we believe this is not a correct representation of ER-positive tumor subtype. 

Being a breast cancer modeling study with optimal control analysis, Oke et al. constructed a model of four variables (including normal cells, tumor cells, natural killer cells and estrogen concentration) with implementation of anti-cancer drugs and a ketogenic diet \cite{oke2018optimal}. They modelled the ketogenic diet as a parameter affecting tumor growth, while anti-cancer drug was modeled as an intervention strategy leading to tumor death, and estrogen concentration to decrease, so that suppression of immune cell activation was relaxed. In addition, optimal values of the parameters corresponding to anti-cancer drugs and ketogenic diet were searched to minimize the total tumor size and estrogen concentration on a prespecified time interval within a quadratic optimal control setting. The authors noted that activities of cancer cells are reduced with the introduction of a ketogenic diet and they underlined the risk of ketoacidosis as a results of too much ketogenic diet. The authors found  based on stability analysis of tumor-free equilibrium point that tumor cells could be eliminated with treatment and ketogenic diet, if the reproduction number of the system was reduced to a value less than one. This is in contrast with our simulations, where HFD does not result in better treatment outcomes. Interestingly, it has been shown that different fat diets, i.e. based on olive vs corn oil, influence breast tumor growth and progression differently \cite{costa2004high, solanas2009differential}, adding complexity to the challenge of optimizing breast cancer treatment and diet.

Overall, the most striking observations from our  simulations are that optimal aromatase inhibitor treatment schedules and the corresponding outcomes differ based on diet, which suggests that low fat diet and other measures to reduce the amount of fat could be introduced to improve treatment outcomes in obese patients. In our ongoing studies, we are modeling such patient-specific treatments making use of individual level data from the NeoLetExe trial \cite{bahrami2019neoletexe}, a neoadjuvant study exploring the lack of cross-resistance between the aromatase inhibitor letrozole and the aromatase inactivator exemestane. The effect of switching to a low-fat diet is not necessarily immediate, because it also depends on the lifestyle and how the body is prone to accumulate fat. Although our extended model might be able to capture lifestyle effects different than diet, this remains to be investigated.

\clearpage

\backmatter

\textbf{Data and code availability}\label{Data_code}
Simulations in this study were performed using MATLAB\textsuperscript{\textregistered}~R2022 \cite{MATLAB:2022}. All data and code used in this article are publicly available in the online repository of the Oslo Center for Biostatistics and Epidemiology \href{(OCBE)}{https://github.com/ocbe-uio/optimal_BC_treatment.git}. 

\clearpage

\textbf{Acknowledgments}
Authors thank Toni Hurtado for fruitful discussion. Tu\u{g}ba Akman was supported by TUBITAK (The Scientific and Technological Research Council of Turkey) under the 2219 Program. This project received funding from the RESCUER (RESistance Under Combinatorial Treatment in ER+ and ER- Breast Cancer) Project - European Union’s Horizon 2020 Research and Innovation Programme under Grant Agreement No. 847912, ERA-NET: Resistance under treatment in breast cancer (RESCUER) Research Council of Norway project code 311188 and from BigInsight with Norges Forskningsråd project number 237718. 
\clearpage

\section*{Declarations}
\textbf{Conflict of interest}
The authors declare that they have no known competing financial interests or personal relationships that could have appeared to influence the work reported in this paper.

\clearpage

\section{Supplementary Material}\label{supp}

\textbf{Computation of fat volume at $t=15$ and carrying capacity of adipocytes:}
Growth of adipose tissue is based on two processes affected by genetic and diet differences, namely cell number increase (hyperplasia) and cell size increase (hypertrophy) \cite{jo2009hypertrophy}. Therefore, computation of the amount of fat at days $t=0$ and $t=15$ in the system are based on these two mechanisms. We start by calculating the average number of adipocytes per image and per $mm^2$ for CD and HFD, separately, based on the study~\cite[Fig.~S2F]{hillers2018obesity}. 

On the other hand, Bozec and Hannemann report that adipocyte size increases in mice fed with HFD, so we set the diameters of adipocytes for CD and HFD as 0.1 mm and 0.2 mm, respectively, as reported in the study \cite[Fig.6C]{bozec2016mechanism}, which leads to two different values for the amount of fat at time $t=15$ for CD and HFD. In addition, mice are fed with control diet after tumor injection in the experiment, so we assume that carrying capacity of adipocytes associated with mice fed with CD and HFD are determined by the amount of fat for HFD at $t=15$. 

Procedure to estimate the amount of fat in the cube with volume $V$ could be summarized as follows: Let $n$ be the number of adipocytes per $mm^2$ and $d$ be the average diameter ($mm$) of an adipocyte. Then, $\sqrt[3]{V}$ is the size of the cube in $mm$ with volume $V$ and $ \frac{\sqrt[3]{V}}{d}$ is the number of layers with height $d$ in the cube, where $d$ is the diameter of an adipocyte. In addition, we compute $\sqrt[3]{V} \times n$ adipocytes per $mm^2$. Finally, the approximate number of adipocytes in the cube with volume $V$ could be aproximated via the product $ \frac{\sqrt[3]{V}}{d} \times \sqrt[3]{V} \times n = \frac{n \sqrt[3]{V^2}}{d} $.

\clearpage

\textbf{Profile likelihood calculations} \label{prof_like}
We present profile likelihood calculations for model~\eqref{Model1} after performing calibration \cite{kreutz2012likelihood} in Fig.~\ref{Fig_S22}. We used "arPLEInit" and "ple" functions of d2d software. The figure indicates practical identifiability of the model corresponding to the 95\% confidence level for the parameters $a_1$ and $k_1$. For parameter $\alpha$, this threshold is above 83\%. 
\begin{figure}[h!]
	\centering
	$
	\begin{array}{c}
	\includegraphics[height=0.75\textwidth]{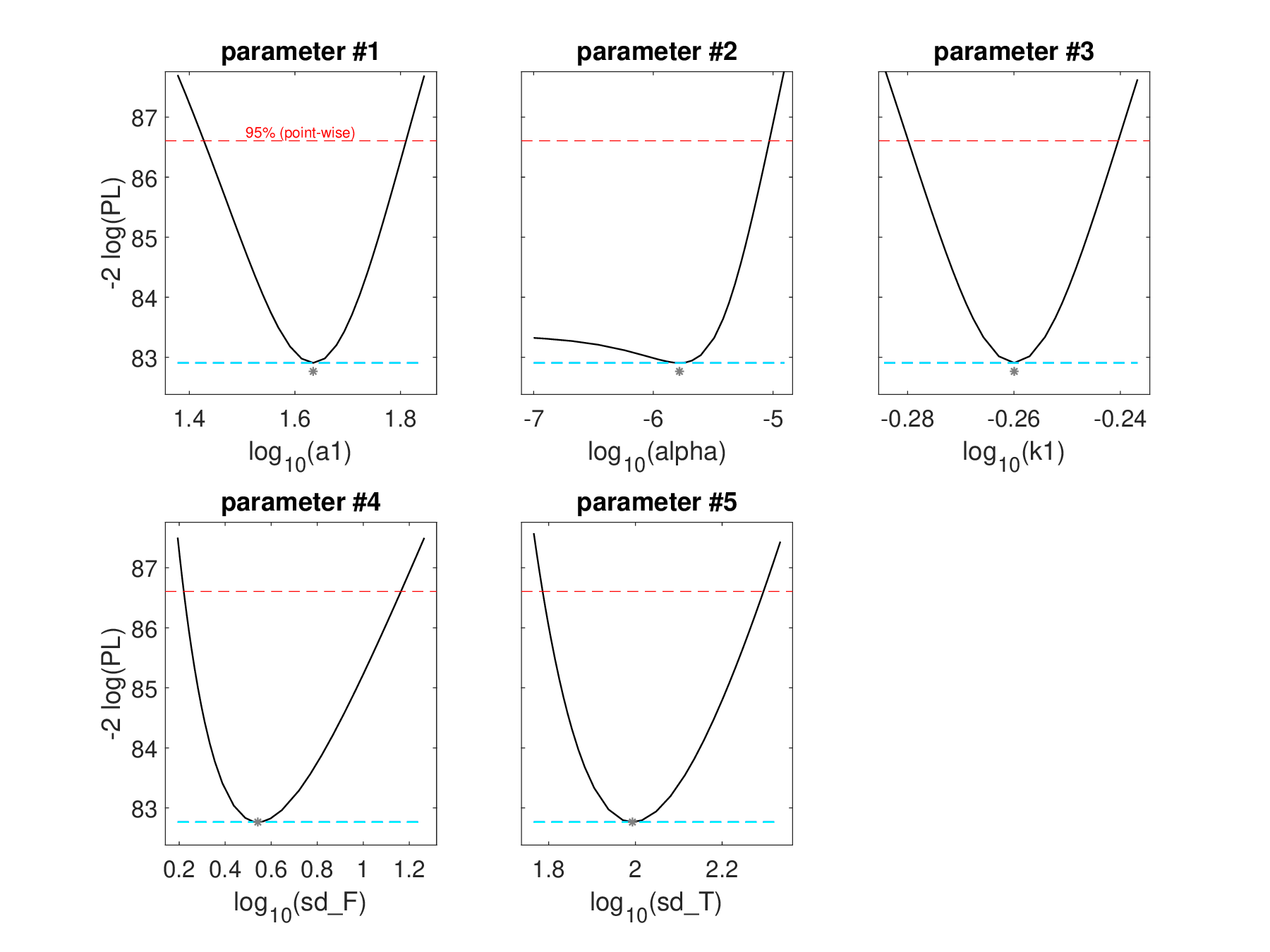}
	\end{array}
	$
	\caption{Profile likelihood computations for the calibrated parameters $a_1, \alpha, k_1$.}\label{Fig_S22}
\end{figure}

\clearpage
\textbf{Sensitivity analysis}

We present the results of the global sensitivity analysis for Eq.~\ref{Model1} to capture the relative changes of the variables with respect to the parameters at days 5, 15 and 25. Here, the method of partial rank correlation coefficient (PRCC) presented by Marino et al. is used \cite{marino2008methodology}. 

Global sensitivity analysis requires a uniformly distributed sample space which is constructed for each parameter with 1000 sample values. Here, intervals for each parameter are constructed with the end points as $\pm$ 5 $\%$ of the baseline parameter. LHS-PRCC MATLAB code given in the web site \cite{PRCCcode} have been used and modified for the current model. The idea behind PRCC is to assign a value between -1 and +1 to each relation. The magnitude of this value determines the strength and the sign indicates the trend of the relation between the parameter and the variable. We briefly compare the most sensitive parameters where sensitivities for CD and HFD are grouped on the left and right panel in Fig.~\ref{Fig_PRCC1}. Firstly, we observe that parameters $k_1$ and $r$ have positive sensitivities for tumor volume, whereas negative sensitivity with respect to $\mu$ decreases over time in magnitude. Sensitivity of estrogen concentration is positive for $r$ and  negative for $\mu$, as expected. In addition, fat volume is negatively sensitive to $k_1$ and positive sensitivity with respect to $m_1$ increases over time and it is effect is smaller for HFD than CD case.


\begin{figure}[h!]
\subfloat[][]{\includegraphics[scale=0.4]{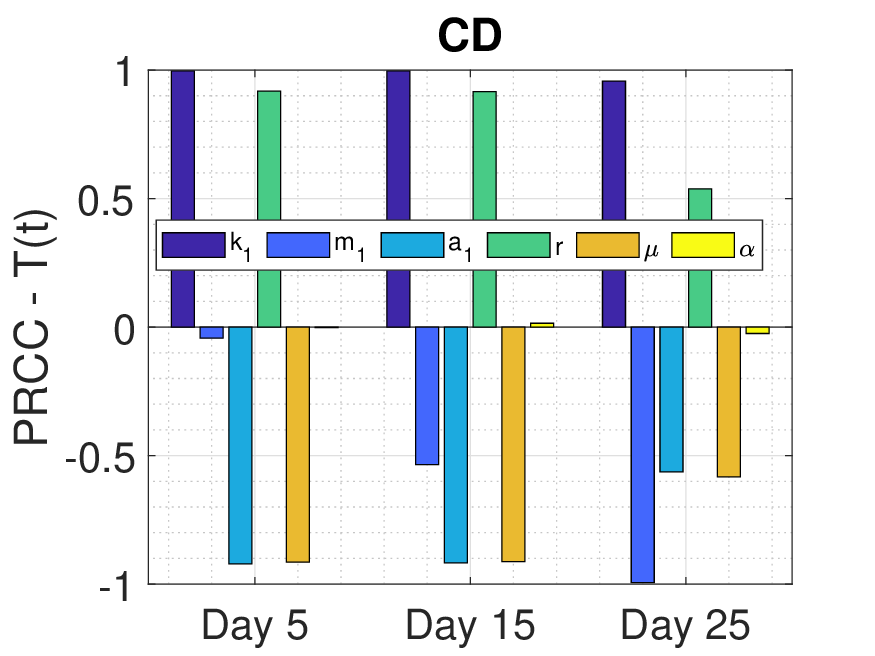}\label{Fig_PRCC1a}}
\subfloat[][]{\includegraphics[scale=0.4]{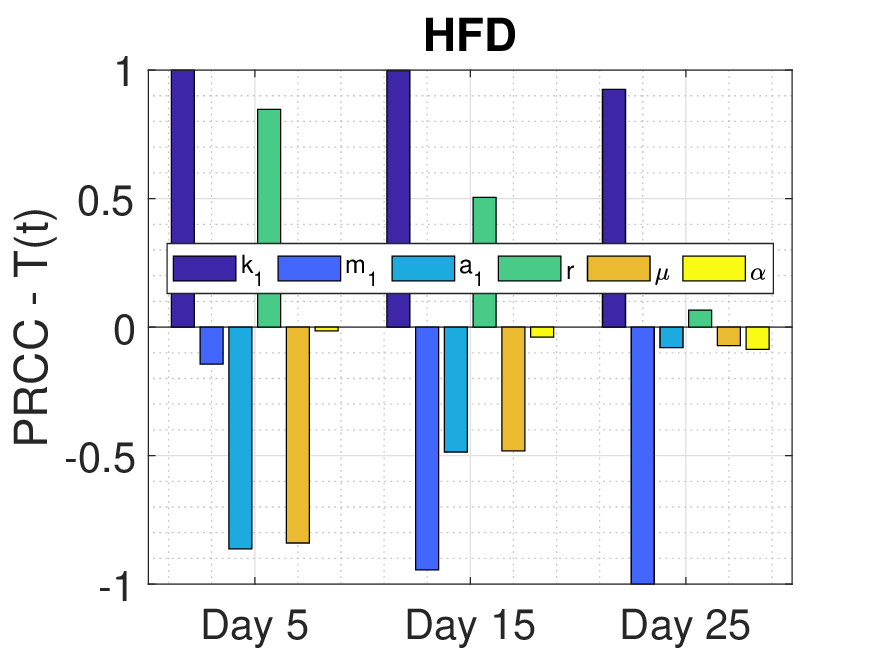}\label{Fig_PRCC1b}}\\
\subfloat[][]{\includegraphics[scale=0.4]{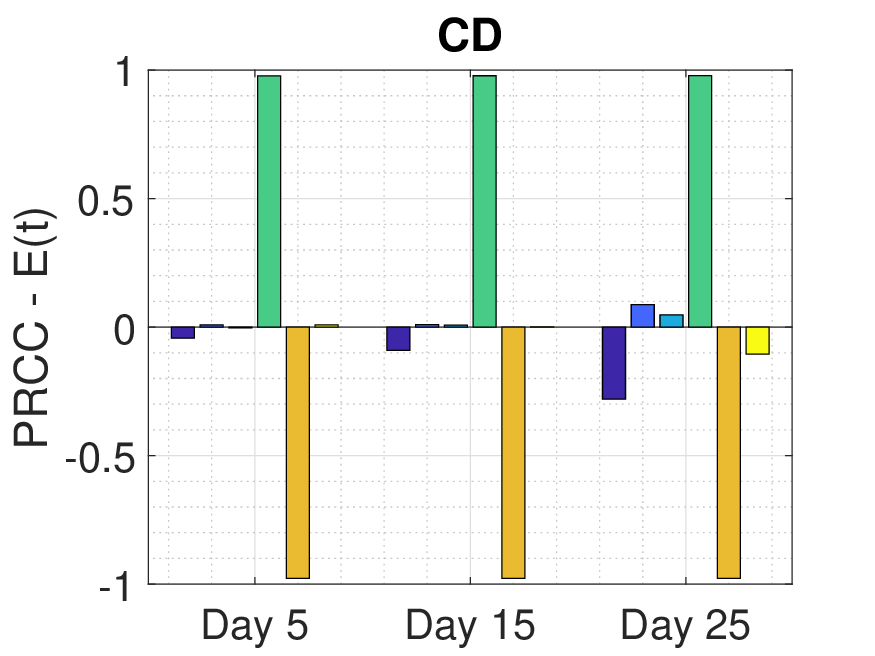}\label{Fig_PRCC2a}}
\subfloat[][]{\includegraphics[scale=0.4]{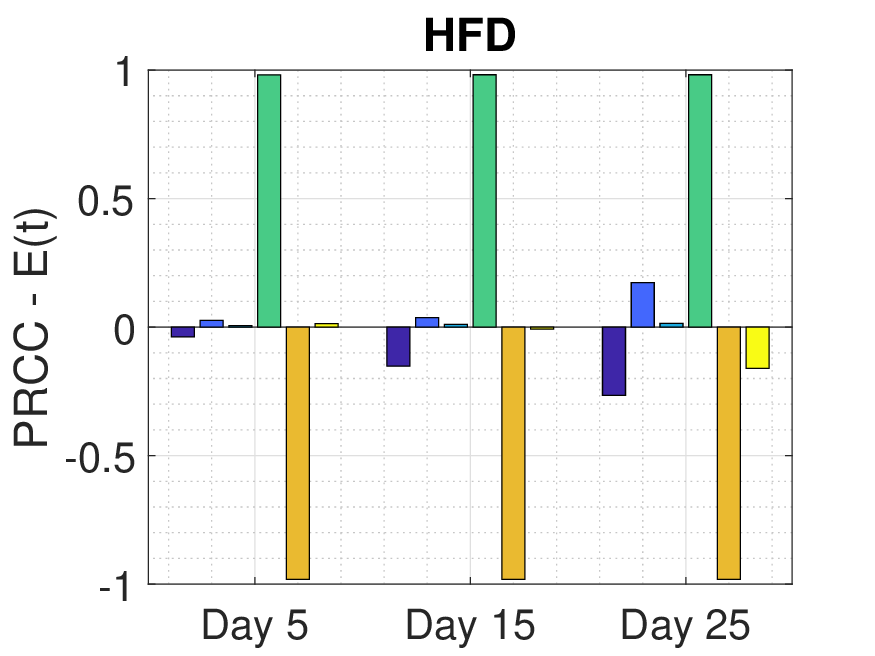}\label{Fig_PRCC2b}}\\
\subfloat[][]{\includegraphics[scale=0.4]{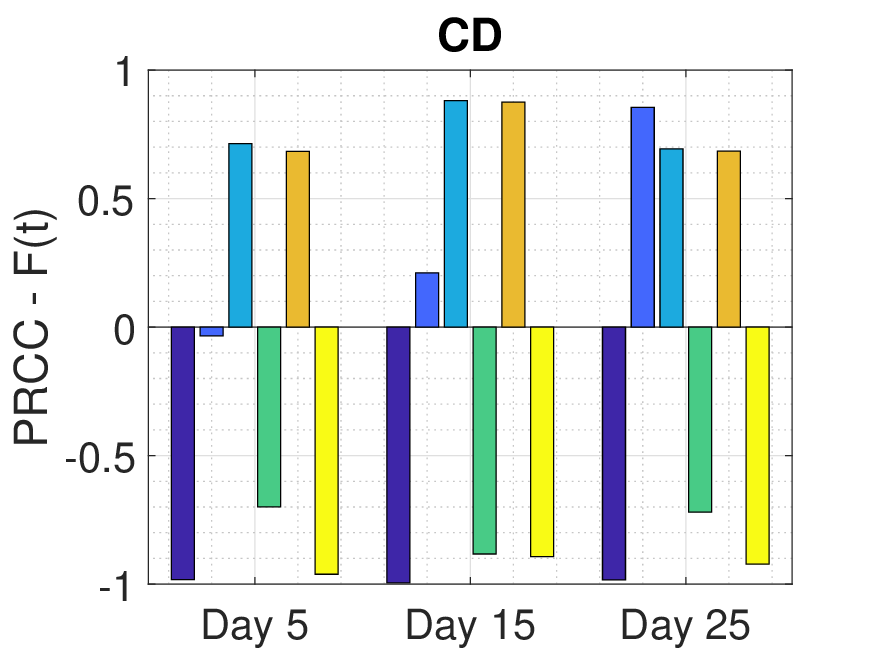}\label{Fig_PRCC3a}}
\subfloat[][]{\includegraphics[scale=0.4]{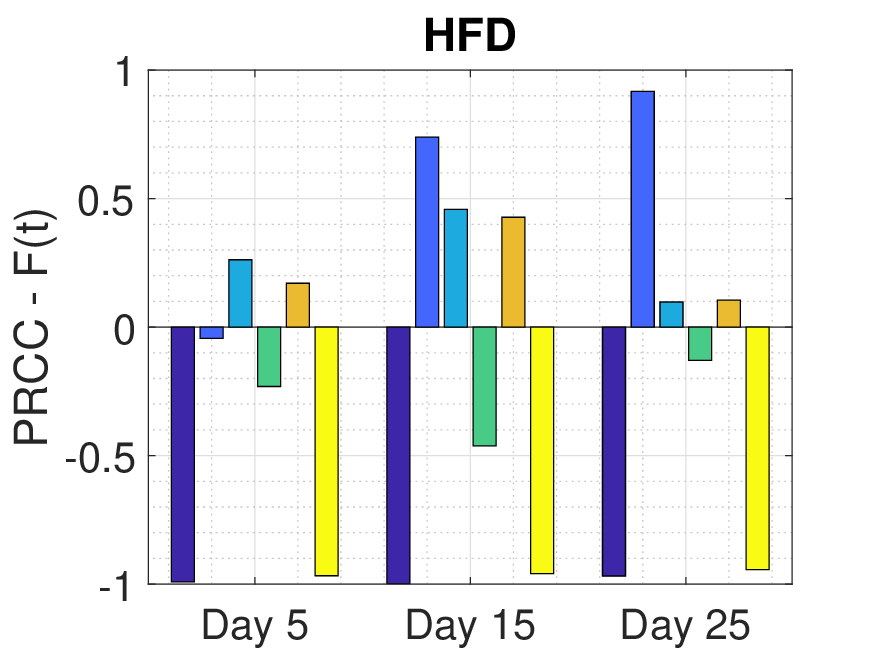}\label{Fig_PRCC3b}}\\
\caption{Sensitivity analysis for CD (\textit{left column}) and HFD (\textit{right column}) for Eq.~\ref{Model1} at days 5, 15 and 25 where \protect\subref{Fig_PRCC1a} - \protect\subref{Fig_PRCC1b} T(t),
\protect\subref{Fig_PRCC2a} - \protect\subref{Fig_PRCC2b} E(t), \protect\subref{Fig_PRCC3a} - \protect\subref{Fig_PRCC3b} F(t).}\label{Fig_PRCC1}
\end{figure}

\clearpage




\end{document}